\documentclass[reqno,oneside]{amsart}
\usepackage{amsmath,amssymb,amsthm,graphicx,tikz}
   \usetikzlibrary{arrows.meta}

\usepackage[utf8]{inputenc}
\usepackage[super]{nth}
\usepackage{xcolor}
\usepackage{comment,fancyhdr,tikz,amsmath,amssymb,amsthm,mathtools,centernot,caption,float,graphicx,makecell,array,float,changepage,verbatim,textcomp,parallel,geometry,ragged2e,xcolor,lastpage,bbm,mathrsfs,pgfkeys,mathabx}
\newcommand\join\vee
\newcommand\meet\wedge
    \usepackage[shortlabels]{enumitem}

\usepackage[export]{adjustbox}
\usepackage[
    colorlinks=false,
    citebordercolor=green,
    linkbordercolor=red
    ]{hyperref}

    \usepackage[capitalise]{cleveref}
    \crefformat{equation}{(#2#1#3)}
    \crefformat{enumi}{(#2#1#3)}
    \crefformat{section}{\S#2#1#3}
    \crefformat{subsection}{\S#2#1#3}
    \crefformat{subsubsection}{\S#2#1#3}
    \numberwithin{equation}{section}
    \newcommand{\crefdefpart}[2]{%
        \hyperref[#2]{\namecref{#1}~\labelcref*{#1}~\ref*{#2}}}
    
    \let\etoolboxforlistloop\forlistloop % save the good meaning of \forlistloop
    \usepackage{autonum}
    \let\forlistloop\etoolboxforlistloop % restore the good meaning of \forlistloop
    \makeatletter
    \newcommand{\blx@noerroretextools}{}
    \makeatother
    
    \usepackage[
        backend=biber,
        style=alphabetic,
        maxbibnames=50,
        maxalphanames=50,
        maxcitenames=50
        ]{biblatex}
    \newcommand{\Z}{\mathbb{Z}}
    
    \newcommand{\R}{\mathbb{R}}

    \def\sph^#1{\mathbb S^{#1}}
    \newcommand\p[1]{\left(#1\right)}
    
    \renewcommand\b[1]{\left[#1\right]}

    \newcommand\jp[1]{\left\langle#1\right\rangle}
    
    \newcommand\eps\varepsilon
    \newcommand\pji\varphi
    \newcommand\sig\varsigma

    \newcommand\beq{\begin{equation}}
    \newcommand\eeq{\end{equation}}

    \def\XXint#1#2#3{{\setbox0=\hbox{$#1{#2#3}{\int}$ }
    \vcenter{\hbox{$#2#3$ }}\kern-.6\wd0}}

    \newtheoremstyle{thmst}% Theorem
        {5pt}% Space above
        {5pt}% Space below
        {}% Body font 
        {}% Indent amount
        {\bfseries}% ⟨Theorem head font⟩
        {.}% ⟨Punctuation after theorem head ⟩
        {.5em}% Space after theorem head 
        {}%
    \newtheoremstyle{rmkst}% Remark
        {5pt}% Space above
        {5pt}% Space below
        {}% Body font 
        {}% Indent amount
        {\bfseries}% ⟨Theorem head font⟩
        {.}% ⟨Punctuation after theorem head ⟩
        {.5em}% Space after theorem head 
        {}%
    \newtheoremstyle{lmst}% Lemma
        {5pt}% Space above
        {5pt}% Space below
        {}% Body font 
        {}% Indent amount
        {\bfseries}% ⟨Theorem head font⟩
        {.}% ⟨Punctuation after theorem head ⟩
        {5pt plus 1pt minus 1pt}% Space after theorem head 
        {}%
    \newtheoremstyle{prpst}% Proposition
        {5pt}% Space above
        {5pt}% Space below
        {}% Body font 
        {}% Indent amount
        {\bfseries}% ⟨Theorem head font⟩
        {.}% ⟨Punctuation after theorem head ⟩
        {.5em}% Space after theorem head 
        {}%
    \newtheoremstyle{defst}% Proposition
        {5pt}% Space above
        {5pt}% Space below
        {}% Body font 
        {}% Indent amount
        {\bfseries}% ⟨Theorem head font⟩
        {.}% ⟨Punctuation after theorem head ⟩
        {.5em}% Space after theorem head 
        {}%
    \newtheoremstyle{wlst}% White Lie
        {5pt}% Space above
        {5pt}% Space below
        {}% Body font 
        {}% Indent amount
        {\bfseries}% ⟨Theorem head font⟩
        {.}% ⟨Punctuation after theorem head ⟩
        {.5em}% Space after theorem head 
        {}%
    \newtheoremstyle{asmpst}% White Lie
        {5pt}% Space above
        {5pt}% Space below
        {}% Body font 
        {}% Indent amount
        {\bfseries}% ⟨Theorem head font⟩
        {.}% ⟨Punctuation after theorem head ⟩
        {.5em}% Space after theorem head 
        {}%
        
    \theoremstyle{thmst}
        \newtheorem{theorem}{Theorem}[section]
        \newtheorem{conjecture}[theorem]{Conjecture}
        
    \theoremstyle{rmkst}
        \newtheorem{remark}[theorem]{Remark}
    \theoremstyle{defst}
        
    \theoremstyle{defst}
        
    \theoremstyle{lmst}
        \newtheorem{lemma}[theorem]{Lemma}
    \theoremstyle{prpst}
        
    \theoremstyle{wlst}
        
    \theoremstyle{asmpst}
        
    \theoremstyle{thmst}

        \newlist{defenum}{enumerate}{1} % should only occur inside definition env.
        \setlist[defenum]{label=(\roman*),ref=\thedefinition\,(\roman*)}
        \crefname{defenumi}{Definition}{Definitions}
        
        \newlist{lemenum}{enumerate}{1} % should only occur inside definition env.
        \setlist[lemenum]{label=(\roman*),ref=\thelemma\,(\roman*)}
        \crefname{lemenumi}{Lemma}{Lemmas}
        
        \newlist{corenum}{enumerate}{1} % should only occur inside definition env.
        \setlist[corenum]{label=(\roman*),ref=\thecorollary\,(\roman*)}
        \crefname{corenumi}{Corollary}{Corollaries}
        
        \newlist{thmenum}{enumerate}{1} % should only occur inside definition env.
        \setlist[thmenum]{label=(\roman*),ref=\thetheorem\,(\roman*)}
        \crefname{thmenumi}{Theorem}{Theorems}
        
        \newlist{rmkenum}{enumerate}{1} % should only occur inside definition env.
        \setlist[rmkenum]{label=(\roman*),ref=\theremark\,(\roman*)}
        \crefname{rmkenumi}{Remark}{Remarks}
        
        \newlist{prpenum}{enumerate}{1} % should only occur inside definition env.
        \setlist[prpenum]{label=(\roman*),ref=\theproposition\,(\roman*)}
        \crefname{prpenumi}{Proposition}{Propositions}
        
        \newlist{axenum}{enumerate}{1} % should only occur inside definition env.
        \setlist[axenum]{label=(\roman*),ref=\theaxiom\,(\roman*)}
        \crefname{axenumi}{Axiom}{Axioms}
        
        \newlist{defcrit}{enumerate}{1} % should only occur inside definition env.
        \setlist[defcrit]{label=(\roman*),ref=\thedefinition\,(\roman*)}
        \crefname{defcriti}{Definition}{Definitions}
        
        \newlist{lemcrit}{enumerate}{1} % should only occur inside definition env.
        \setlist[lemcrit]{label=(\alph*),ref=\thelemma\,(\alph*)}
        \crefname{lemcriti}{Lemma}{Lemmas}
        
        \newlist{thmcrit}{enumerate}{1} % should only occur inside definition env.
        \setlist[thmcrit]{label=(\alph*),ref=\thetheorem\,(\alph*)}
        \crefname{thmcriti}{theorem}{theorems}
        
        \newlist{rmkcrit}{enumerate}{1} % should only occur inside definition env.
        \setlist[rmkcrit]{label=(\alph*),ref=\theremark\,(\alph*)}
        \crefname{rmkcriti}{remark}{remarks}
        
        \newlist{prpcrit}{enumerate}{1} % should only occur inside definition env.
        \setlist[prpcrit]{label=(\alph*),ref=\theproposition\,(\alph*)}
        \crefname{prpcriti}{proposition}{propositions}

        \makeatletter
        \let\ifnc\@ifnextchar
        \makeatother

            \reversemarginpar
            \def\redit {\marginpar{\raggedleft\color{red}{See edit $\implies$}}\color{red}}
            \def\rs#1.{\redit #1.\color{black}}
            \def\rsm#1.{{\color{red} #1.}}
            
            \def\bedit {\marginpar{\raggedleft\color{blue}{See edit $\implies$}}\color{blue}}
            \def\bs#1.{\bedit #1.\color{black}}
            \def\bsm#1.{{\color{blue} #1.}}

            \def\ind#1_#2{\left\{#1_{#2}\right\}}
            \def\<#1>{\jp{#1}}
            \def\-#1/{{}_{#1}}
            \makeatletter
                \newcommand\pl@write[3]{%
                    \left\|#1\right\|_{L^{#2}#3}%
                    }
                \def\pl #1__#2{%
                    \def\pl@arg@i{#1}%
                    \def\pl@arg@ii{#2}%
                    \def\pl@arg@iii{\alpha}%
                    \futurelet\next\pl@eval%
                    }
                \def\pl@eval{%
                    \ifx\next\bgroup%
                            \expandafter\pl@eval@iii%
                        \else%
                            \expandafter\pl@eval@ii%
                        \fi%
                    }
                \def\pl@eval@iii#1{%
                    \pl@write\pl@arg@i\pl@arg@ii{\p{#1}}%
                    }
                \def\pl@eval@ii{%
                    \pl@write\pl@arg@i\pl@arg@ii{}%
                    }
            \makeatother
            \makeatletter
                \newcommand\ef@write[3]{%
                    ^{#1\frac{#2}{#3}}{}%
                    }
                \newcommand\Ef@write[3]{%
                    ^{#1#2/#3}{}%
                    }
                \def\ef #1{%
                    \def\ef@arg@i{}%
                    \def\ef@arg@ii{#1}%
                    \futurelet\next\ef@eval%
                    }
                \def\ief #1{%
                    \def\ef@arg@i{-}%
                    \def\ef@arg@ii{#1}%
                    \futurelet\next\ef@eval%
                    }
                \def\ef@eval{%
                    \ifx\next/%
                            \expandafter\ef@eval@iii%
                        \else%
                            \expandafter\ef@eval@ii%
                        \fi%
                    }
                \def\ef@eval@iii/{%
                    \expandafter\ef@eval@v%
                    }
                \def\ef@eval@v#1{%
                    \Ef@write\ef@arg@i\ef@arg@ii{#1}}
                \def\ef@eval@ii{%
                    \expandafter\ef@eval@iv%
                    }
                \def\ef@eval@iv#1{%
                    \ef@write\ef@arg@i\ef@arg@ii{#1}%
                    }
            \def\e@writep #1{%
                ^{+#1}{}%
                }
            \def\e@writen #1{%
                ^{-#1}{}%
                }
            \def\e #1{%
                \ifx#1-%
                        \expandafter\e@writen%
                    \else%
                        \ifx#1+%
                                \expandafter\e@writep%
                            \else%
                                ^{#1}{}%
                            \fi%
                    \fi%
                }
            \def\e@2 {%
                }

            \makeatother
            \makeatletter
            \newcommand{\undersim}[1]{\mathrel{\mathpalette\@undersim{#1}}}
            \newcommand{\@undersim}[2]{%
              \vcenter{%
                \ialign{%
                  ##\cr
                  $\m@th#1#2$\cr
                  \noalign{\nointerlineskip\kern.2ex}
                  $\m@th#1\sim$\cr
                  \noalign{\kern-.4ex}
                }%
              }%
            }
            \makeatother

\definecolor{packetblue}{RGB}{33,95,166}
\definecolor{packetred}{RGB}{200,78,53}

\title{On $C^k$ convex variants of Stein's Weighted $L^2$ Conjecture for Bochner-Riesz means}
\author{Ruixiang Zhang}
 \thanks{University of California, Berkeley and Institute for Advanced Study, Email: \texttt{ruixiang@berkeley.edu, rzhang@ias.edu}}
\date{\today}

\begin{document}
\begin{abstract}
     We prove power-loss results for some variants of Stein's Weighted $L^2$ Conjecture for Bochner-Riesz means. The variants concern multipliers by convex bodies with $C^k$ boundaries in every dimension $n$. The counterexamples are related to  recently-constructed counterexamples to the Mizohata-Takeuchi Conjecture. Related results like logarithmic blowups for the endpoint case of the original conjecture are also discussed.
\end{abstract}
\maketitle

\section{Introduction}

In 1979,  Stein asked  if  one can expect to use Kakeya or Nikodym type maximal functions to control the ball multiplier or Bochner–Riesz multiplier operators by weighted $L^2$  inequalities (Conjecture \ref{Steinconj} below, see Problem 5 (b) and (c) in \cite{stein-conjecture-79}). Stein's question was motivated by the importance of the ball multiplier and Bochner-Riesz means. It is known that these operators fail to satisfy certain $L^p$ bounds that may have appeared reasonable (for example, Fefferman \cite{fefferman1971multiplier} disproved $L^p$ boundedness of the ball multiplier for all $p \neq 2$ and all dimensions $>1$ using the existence of measure-zero Kakeya sets). Because of the exotic $L^p$ behaviors, Stein asked whether one can hope for a substitute boundedness in terms of weighted $L^2$ spaces: can one find  natural weighted $L^2$ estimates for the ball multiplier and the Bochner-Riesz means, perhaps via the Kakeya/Nikodym type maximal functions?

In this paper, we will show that due to existence of the counterexample to the Mizohata-Takeuchi Conjecture \cite{cairo2025power} (along with its proof), some cautions have to be taken if one wants to replace the ball multiplier by a strictly convex body multiplier with $C^k$ boundary. This is related to the existence of  many points in a higher-rank generalized arithmetic progression on the boundary of the convex body, and demonstrates the role of these objects in the theory of Bochner-Riesz-type operators.

For technical simplicity, we will focus on Stein's Problem 5(c), $\varepsilon=\delta=0$ case below, but remark that one can have a similar discussion for the other parts of problems in 5(b) and (c). Some of this is recorded in Section \ref{variants} below.

\begin{conjecture}[Problem 5(c) in \cite{stein-conjecture-79}, $\varepsilon=\delta=0$ case]\label{Steinconj}
    Let $S$ be the disc multiplier in $\R^2$: \[(Sf)^{\hat{}} = 1_{B_1} \hat{f}.\]

    Is it true that for every $f\in L_{loc}^1 (\R^2)$ and every continuous weight function $w$, \[\int_{\R^2} |Sf (x)|^2 w(x) \mathrm{d}x \lesssim \int_{\R^2} |f(x)|^2 \mathcal{M} w (x) \mathrm{d}x,\] where $\mathcal{M}$ is the Nikodym-type maximal function: \[\label{defnofmaximalfunction}\mathcal{M}w(x) = \sup_{r>0, \theta}\frac{1}{2r}\int_{-r}^r w(x-t(\cos \theta, \sin \theta))\mathrm{d}t.\]
\end{conjecture}

We will show that this conjecture and all higher-dimensional analogues are false when the unit disc is replaced by some suitable convex body with $C^k$-boundary, for any prescribed $k$.

\begin{theorem}\label{Counterexthm}
    For any given dimension $n \geq 2$ and smoothness $k\geq 2$, there exists a compact convex body $K \subset \R^n$ with $C^k$-boundary, such that if we define the multiplier by $1_K$ as \[\label{defnofSK}(S_K f)^{\hat{}}=1_K \cdot \hat{f},\] the inequality \[\label{failedestimateforSK}\int_{\R^n} |S_Kf (x)|^2 w(x) \mathrm{d}x \lesssim \int_{\R^n} |f(x)|^2 \mathcal{M} w (x) \mathrm{d}x\] cannot always hold. More quantitatively, for any fixed $\varepsilon>0$ we can find such a $K$ such that for every sufficiently large $R\gtrsim_{n, k, \varepsilon} 1$, there is an $f$ and $w$ supported in $B_R$ such that 
    \[\label{correctededestimateforSK}\int_{\R^n} |S_Kf (x)|^2 w(x) \mathrm{d}x \geq R^{\frac{n-1}{n-1+k}-\eps} \int_{\R^n} |f(x)|^2 \mathcal{M} w (x) \mathrm{d}x.\]
\end{theorem}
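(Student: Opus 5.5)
We build a counterexample that mirrors the Mizohata–Takeuchi counterexample of \cite{cairo2025power}. Take $f$ to be a sum of wave packets, one for each point of a large set $\Lambda$ of lattice-like points on $\partial K$. Take $w$ to be the indicator of a union of small balls at which all of these packets interfere constructively.

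The first step is the construction of $K$. Fix a scale $R$. Following the construction in \cite{cairo2025power}, we want a hypersurface $\partial K$ that contains a set $\Lambda$ of $N$ points, forming a piece of a higher-rank generalized arithmetic progression with frequency separation $\gtrsim R^{-1}$, in such a way that the phases $e^{2\pi i \xi\cdot x}$, $\xi\in\Lambda$, are simultaneously close to $1$ for $x$ in a union $X$ of many unit balls in $B_R$. Concretely, we perturb a strictly convex surface (say a sphere) so that it passes through a grid $\Lambda=\{\xi_0+\sum_j m_j v_j\}$ with $v_j$ spanning $n-1$ tangent directions plus a small normal correction. The normal correction is chosen so that $\Lambda$ lies in a lattice $L$. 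Then $X$ can be taken to be the points of $B_R$ near the dual lattice $L^*$.

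The bump perturbations have height $h$ over cells of width $\rho$, and $C^k$-smoothness with strict convexity forces $h\lesssim \rho^k$. Matching the lattice-point grid spacing against this height constraint, and then summing over a dyadic sequence of scales $R_j\to\infty$ with rapidly decaying perturbations (so a single $K$ works for all large $R$ up to an $\varepsilon$-loss), gives the exponent $\frac{n-1}{n-1+k}$. I would take the diagonal/summation argument from the construction in \cite{cairo2025power} essentially verbatim.

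The second step is the estimate. Let $f=\sum_{\xi\in\Lambda}\phi_\xi$, where $\hat\phi_\xi$ is a smooth bump of radius $\sim R^{-1}$ centered at $\xi$. The multiplier $1_K$ cuts each bump roughly in half. Each $S_K\phi_\xi$ is essentially $\phi_\xi$ restricted in frequency to a half-ball, so it is a modulated bump of size $\sim 1$ on $B_R$ (after normalizing $|\phi_\xi|\sim R^{-n}\cdot R^n$), plus tails decaying away from $B_R$.

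On $X$ the phases align, so $|S_Kf|\gtrsim N$ there and
\[
\int|S_Kf|^2w\gtrsim N^2|X|.
\]
On the other side, $\|f\|_2^2\sim N|B_R|$ by orthogonality. For the maximal function, $\mathcal M w\lesssim \sup_\ell |X\cap \ell|/|\ell|$ over line segments $\ell$, and we need this to be $\lesssim R^{\eps}|X|/R^n$ on average over $B_R$. Here we use the fact that $X$ is a piece of a dual lattice, with spacing chosen generic enough, so that no line captures more than its share of it. The ratio is then $\gtrsim N R^{-\eps}$, and $N$ is arranged to be $R^{\frac{n-1}{n-1+k}}$. To keep $f,w$ supported in $B_R$, we multiply by a cutoff; the resulting errors from the tails of the packets are Schwartz and negligible.

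I expect the main obstacle to be controlling $\mathcal M w$. A lattice-like $X$ has many points on lines in rational directions, and the maximal function over all directions and lengths $r$ can be much larger than the average density. I would first try to bound $\int |f|^2\mathcal M w$ crudely by $\|f\|_\infty^2\int_{B_R}\mathcal Mw$, using a Kakeya/Nikodym-type $L^1\to L^{1,\infty}$-loss of $R^{\eps}$. Failing that, I would use the rank of the dual progression and choose the lattice to be Diophantine-generic, so that long segments meet only $\lesssim R^{\eps}\cdot(\text{expected})$ of its balls. Short segments ($r\lesssim1$) only see one ball, which contributes density at most $1$ on $X$ itself; the region $X$ must then be handled by the observation that $|f|^2$ is not concentrated there.
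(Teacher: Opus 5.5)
There is a genuine gap. It sits exactly at the step you flagged as the main obstacle, and it is fatal to this construction rather than a technicality.

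In your setup $f$ and $w$ occupy the same ball $B_R$, and $f(x)=\check\beta(x)\sum_{\xi\in\Lambda}e^{2\pi i\xi\cdot x}$ with $|\check\beta|\sim 1$ on the bulk of $B_R$. The interference you use to get $|S_Kf|\gtrsim N$ on $X$ therefore also gives $|f|\gtrsim N$ on $X$. Segments shorter than the radius of the balls in $X$ show $\mathcal Mw\ge 1$ on $X$ (in general $\mathcal Mw\ge w$). Hence
\[
\int|f|^2\mathcal Mw\ \ge\ \int_X|f|^2\ \gtrsim\ N^2|X|\ \gtrsim\ \left(\sum_{\xi\in\Lambda}\|\hat\phi_\xi\|_{L^1}\right)^2|X|\ \ge\ \|S_Kf\|_\infty^2|X|\ \ge\ \int|S_Kf|^2w .
\]
So the ratio is $O(1)$ however $\Lambda$ and $X$ are chosen. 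Your rescue ("$|f|^2$ is not concentrated on $X$") is false, and the crude bound $\|f\|_\infty^2\int_{B_R}\mathcal Mw$ is already $\ge N^2|X|$. The Mizohata--Takeuchi example only has to beat averages over full-length tubes. $\mathcal M$ sees arbitrarily short segments, so that example cannot be transplanted with overlapping supports.

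The missing idea is a Fefferman-type shifting trick that separates $\operatorname{supp}f$ from $\operatorname{supp}w$.
\begin{itemize}
\item Since the inequality is dilation invariant, replace $S_K$ by the shell multiplier $S_{\mathcal N_{1/R}(\partial K)}$, a difference of two dilates of $S_K$.
\item Take $\hat f$ to be bumps of the coarser width $Q/R$ at the points $E$ of the projected lattice lying in $\mathcal N_{1/R}(\partial K)$ and in one pigeonholed cap of radius $1/Q$. Then $\operatorname{supp}f\subset B_{2R/Q}$.
\item Take $w=|\check b|^2$, with $b$ the $1/R$-mollified projected lattice $A$, cut off to the annulus $B_{2R}\setminus B_{10R/Q}$.
\end{itemize}
The shell multiplier trims each $Q/R$-bump to a $\frac1R\times\frac QR$ slab. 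This spreads $f$ out to scale $R$, into the annulus where $w$ lives.

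On $\operatorname{supp}f$ only segments of length $\gtrsim R/Q$ reach $\operatorname{supp}w$, so $\mathcal Mw\lesssim\frac QR\sup_\ell\int_\ell w\lesssim QR^{\eps}|A|$. The last bound comes by duality: line integrals of $w$ are hyperplane integrals of $b*\tilde b$. It then uses the hyperplane non-concentration of the high-rank projection $\pi(R^{-1/N}\Z^N)$. An honest rank-$n$ lattice, as in your proposal, puts $\gtrsim M^{1-1/n}$ of its $M$ points in $B_1$ on a single lattice hyperplane, and would lose that factor.

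Finally, the lower bound is no longer a pointwise interference count. One writes $\int|S_{\mathcal N_{1/R}(\partial K)}f|^2w=I_1-I_2$, where $I_2$ is the contribution of the excised inner ball, i.e.\ an extra mollification at scale $Q/R$ on the Fourier side. The tangent planes in the $1/Q$-cap are nearly parallel, and the separation condition keeps the slabs around distinct points of $A+A$ disjoint. Together these give $I_1\gtrsim\frac1Q(\frac RQ)^n|E|^2|A|$ and $I_2\lesssim\frac1{Q^2}(\frac RQ)^n|E|^2|A|$. The result is a gain of $Q^{-2}R^{-\eps}|E|$, with $|E|\gtrsim Q^{1-n}R^{\frac{n-1}{n-1+k}-\eps}$.
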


\subsection{Ideas behind the counterexample}

This counterexample $(f, w)$ for Conjecture \ref{Steinconj} was obtained by a construction by Cairo and the author in the recent power-blowup refutation to the Mizohata-Takeuchi Conjecture for $C^k$ hypersurfaces \cite{cairo2025power}, plus a ``shifting trick'' that is in spirit similar to what Fefferman used in his proof of the unboundedness of the ball multiplier in \cite{fefferman1971multiplier}. In addition to the  two main ideas above, some technicality is needed in our current proof to estimate an $L^2$ mass efficiently. We will give a detailed outline of the proof in Section \ref{outlinesec}.

\subsection{Historical background}

Stein's original motivation in proposing Conjecture \ref{Steinconj} was to find suitable maximal functions to have correct weighted $L^2$ estimates for the Bochner-Riesz operator. One natural candidate was the Nikodym maximal function, especially because its ubiquity in higher dimensional Fourier analysis and  (conjectural) numerically coincidental $L^p$ boundedness properties. However, because of this counterexample, one may want to include maximal functions based on generalized arithmetic progression (GAP)s for an alternative formulation of Conjecture \ref{Steinconj}, especially if they wish the same inequality to hold for all convex body multipliers.\footnote{This direction also came up in discussions with Larry Guth and Hong Wang, and with Hannah Cairo. The author would like to thank them for helpful discussions.} We caution that the recent refutation of the Unit Distances Conjecture \cite{openai-unit-distances-26} (along with related works like \cite{bssz-sum-product-26}) newly unveiled some exotic behaviors of higher rank GAPs' role in analysis, and it may even be possible that the most reasonable conjecture can involve objects other than bounded rank GAPs.

As a comparison, we have the following theorem for convex polytope multipliers in every dimension:

\begin{theorem}\label{SingleNikodym}
For every given convex compact polytope $K \subset \R^n$, define $S_K$ by \ref{defnofSK}, then
    \[\label{correcestimateforSK}\int_{\R^n} |S_Kf (x)|^2 w(x) \mathrm{d}x \lesssim_{K, s} \int_{\R^n} |f(x)|^2 (\mathcal{M} w^s (x))^{\frac{1}{s}} \mathrm{d}x, \forall s>1.\] %where $M$ is the strong maximal function and $s>1$ is arbitrary.
\end{theorem}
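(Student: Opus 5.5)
A convex polytope is a finite intersection of half-spaces, $K=\bigcap_{j=1}^N H_j$ with $H_j=\{\xi:\langle \xi,\nu_j\rangle\le c_j\}$. My plan is to reduce \cref{correcestimateforSK} to a weighted estimate for a single half-space multiplier, and then iterate.

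Write $1_K=\prod_j 1_{H_j}$, so $S_K=T_1\circ T_2\circ\cdots\circ T_N$, where $T_j$ is the multiplier by $1_{H_j}$. Each $T_j$ acts only in the direction $\nu_j$. Up to modulation by $e^{ic_j\langle x,\nu_j\rangle}$, which commutes with taking absolute values and so leaves the weighted norms unchanged, $T_j$ is $\tfrac12(I+iH_{\nu_j})$. Here $H_{\nu}$ is the one-dimensional Hilbert transform applied along lines parallel to $\nu$.

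The key one-dimensional input is the $A_p$-type weighted inequality for the Hilbert transform. For $s>1$, the function $W=(M_1 w^s)^{1/s}$ built from the one-dimensional Hardy--Littlewood maximal function is an $A_1$ weight with constant depending only on $s$; this is the Coifman--Rochberg lemma. Since $A_1\subset A_2$, we get
\[\int_{\R}|Hg|^2 w\le \int_{\R}|Hg|^2 W\lesssim_s \int_{\R}|g|^2 W.\]
Applying this on each line parallel to $\nu_j$ and using Fubini gives
\[\int_{\R^n}|T_jg|^2 w\lesssim_s \int_{\R^n}|g|^2 (M_{\nu_j} w^s)^{1/s},\]
where $M_{\nu_j}$ is the directional maximal function along $\nu_j$. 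This is pointwise at most $(\mathcal{M}w^s)^{1/s}$, because $\mathcal{M}$ takes a supremum over all directions.

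The main obstacle is iterating through the $N$ factors without the exponents degenerating. Composing naively gives a final weight of the form $\bigl(M_{\nu_1}(M_{\nu_2}(\cdots)^{s})\cdots\bigr)^{1/s}$. This is not controlled by a single application of $\mathcal M$, since nested directional maximal functions in different directions behave like a strong maximal function.

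What I would try first is to exploit the fact that $W_j=(M_{\nu_j}w^{s})^{1/s}$ is itself an $A_1$ weight along every line in direction $\nu_j$, and to choose the exponents at each step so that each successive factor only needs a weight that is $A_2$ along its own direction. If that fails, I would instead use a decomposition adapted to the polytope. Cut $K$ into pieces near each face and vertex, or use a smooth partition of unity in frequency. Near a face only one half-space is active, so that piece is handled by the single-direction estimate above. The smooth remainder is a Schwartz multiplier, hence dominated pointwise by a Hardy--Littlewood average, which is controlled by $\mathcal M w$ via averaging over directions. The vertex pieces, where several half-spaces meet, would be handled by the composition argument above, paying only a constant depending on $K$ and $s$. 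Summing finitely many pieces then gives \cref{correcestimateforSK} with constant depending on $K$ and $s$.
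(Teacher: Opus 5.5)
Your one-direction step is correct. By Coifman--Rochberg, $(M_\nu w^s)^{1/s}$ is $A_1$ on every line parallel to $\nu$, so a single half-space multiplier satisfies the estimate, and $M_\nu w\lesssim\mathcal M w$. But that is only the codimension-one case. The proposal never handles several half-spaces meeting at an edge or vertex: you identify the nesting of directional maximal functions as the obstacle, and then both fallback plans send the vertex pieces back through the same composition.

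Re-choosing exponents cannot fix this, because the obstruction is geometric. After the first step the weight $(M_{\nu_1}w^s)^{1/s}$ has no regularity in direction $\nu_2$. The second step must therefore apply $M_{\nu_2}$ to it, and the nested weight is not pointwise controlled by $(\mathcal M w^r)^{1/r}$ for any $r$. Here is an example. In $\mathbb{R}^2$, split $[1,2]$ into intervals $I_1,\dots,I_J$ of length $1/J$, and let $w=1_E$ with $E=\bigcup_j\{(a,b): b\in I_j,\ 2^j\le|a|\le 2^{j+1}\}$.
\begin{itemize}
\item $M_{e_1}w(x_1,b)\ge 1/3$ for all $b\in[1,2]$ and $|x_1|\le 1/10$, so $M_{e_2}\bigl[(M_{e_1}w^s)^{t/s}\bigr]\gtrsim 1$ on $B(0,1/10)$.
\item A segment centred in $B(0,1/10)$ at angle $\theta$ to $e_1$ meets $E$ only at heights in at most five of the $I_j$, namely those with $2^j\approx|\cot\theta|$. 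Hence $(\mathcal M w^r)^{1/r}=(\mathcal M w)^{1/r}\le (3/J)^{1/r}$ there.
\end{itemize}

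The smooth partition of unity does not help either. Even at a face, the piece is $S_\chi T_F$, not $T_F$. The Schwartz factor brings in an averaging $|\check\chi|*\cdot$ that has to be composed with $M_\nu$, and that composition nests in the same way.

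What is missing is a genuinely multi-parameter weighted inequality, and this is what the paper imports. C\'ordoba's theorem for the unit box (a product of interval multipliers in the coordinate directions) gives the estimate with weight $(M_S w^s)^{1/s}$. Here $M_S$ is the strong maximal function: a single supremum over rectangles containing $x$, not an iterate of directional maximal functions. Integrating in polar coordinates about $x$ shows $M_S\lesssim_n\mathcal M$ pointwise.

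The paper then proceeds as follows.
\begin{itemize}
\item Since $\mathcal M$ commutes exactly with affine changes of variables, the estimate passes to parallelepipeds.
\item By scaling and limits, it passes to orthants and to cylinders over lower-dimensional orthants.
\item The polytope is handled with no smooth cutoffs. The Brianchon--Gram identity writes $1_K$ as a finite signed sum of indicators of tangent cones of faces.
\item Each cone triangulates, up to null sets, into affine images of such orthants or cylinders. So $S_K$ is a finite linear combination of operators that already satisfy the estimate.
\end{itemize}
Your half-space bound is the codimension-one instance of this. The remaining cases need C\'ordoba's theorem or a proof of it, and your proposal does not supply either.
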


\begin{proof}
A classical result of C\'{o}rdoba (The author learned it from \cite{cordoba1981spherical}, Theorem 4, where the authors attributed the result to \cite{cordoba1981some}) implies \eqref{correcestimateforSK} with $K$ being a unit box\footnote{In C\'{o}rdoba's original result, it suffices to use the strong maximal function, where only coordinate-axes-parallel boxes were considered. The strong maximal function is known to be dominated by $\mathcal{M}$ up to a dimensional constant.  We also note that C\'{o}rdoba's original theorem was a stronger square function estimate.}, hence also for $K$ being any parallelepiped by affine transformation. By a limiting argument, \eqref{correcestimateforSK} also holds for $K$ being any orthant, cylinders over octants in lower dimensions and any of their affine transformations. The result can then be seen by writing a general $1_{K}$ as a linear combination of characteristic functions of tangent cones by the Brianchon-Gram identity (see for example equation (42) in \cite{gravin2012inverse}, Appendix A), which can further be triangulated into affine transformations of orthants or cylinders over orthants in lower dimensions.
\end{proof}

Theorem \ref{SingleNikodym} should be compared with Theorem \ref{Counterexthm} to illustrate different behaviors of polytope multipliers and general convex sets multipliers. A similar proof shows that one can in fact take $K$ to be any polytope (not necessarily convex or compact) in Theorem \ref{SingleNikodym}. The author was not able to find Theorem \ref{SingleNikodym} in its current form in the literature and presents a self-contained proof here, but notes that the proof of it is closely related to classical work of C\'{o}rdoba \cite{cordoba1982geometric}, C\'{o}rdoba-Fefferman \cite{cordoba1976weighted} (whose results also have endpoint counterparts by Wilson \cite{wilson1989weighted} and P\'{e}rez \cite{perez1994weighted}). For related context in singular integration theory, see e.g. the more recent works \cite{bennett2014optimal, beltran2018fefferman} and references therein.

For related history in weighted norm inequalities, Rubio de Francia developed general principles
connecting weighted $L^2$ inequalities with $L^p$ boundedness
(see for example \cite{rubio-weighted-vector-82} and his
boundedness principle). In $\mathbb{R}^2$, Carbery
\cite{carbery-weighted-maximal-br-85} proved weighted $L^2$
inequalities for Bochner--Riesz means of every positive order,
constructing a family of substitutes for $\mathcal{M}$,
including one bounded on $L^r$ for $2\leq r\leq4$.
Later, Carbery and Seeger \cite{carbery-seeger-weighted-br-00}
constructed, for each positive order, a single substitute
bounded on $L^r$ throughout $2\leq r\leq\infty$
(and slightly below $2$, depending on the order).
Their improvement combined Carbery's method with ideas from
Seeger's work \cite{seeger-endpoint-br-96} on endpoint
Bochner--Riesz.
These controlling weight operators differ from the Nikodym
maximal operator.

\subsection{Organization of the paper}

This paper is organized as follows: We first give an outline of the proof of Theorem \ref{Counterexthm} in Section \ref{outlinesec}. Section \ref{proofsec} has the full detailed constructional proof for Theorem \ref{Counterexthm}. Section \ref{variants} discusses consequences of our proof technique on some variants of Conjecture \ref{Steinconj}.

\section*{Acknowledgments} This work is supported by NSF CAREER DMS–2143989 and Sloan Research Fellowship. The author would like to thank Anthony Carbery and Andreas Seeger for helpful discussions, especially  on the historical context and some benchmarks. He would also like to thank Hannah Cairo for helpful discussions. GPT-5.6 Sol was used when the author tried to search the literature for a theorem like Theorem \ref{SingleNikodym}. The current proof of it was found by GPT-5.6 Sol and rewritten by the author. GPT-5.6 Sol was also used in preparing the illustrative figure at the end of Section \ref{proofsec}. Appendix A gives some caution on the endpoint case of Stein's original conjecture via an elementary construction, and is generated by GPT-6 Astra in the proofreading stage of the present article.

\section{Outline of the proof of Theorem \ref{Counterexthm}}\label{outlinesec}

In this section, we outline the proof of Theorem \ref{Counterexthm}. It is known that Stein's conjecture is related to the Mizohata-Takeuchi (MT) Conjecture, which was recently disproved \cite{cairo-counterexample-25, cairo2025power}. As pointed out by Cairo \cite{cairo-counterexample-25}, disproof of the MT conjecture leads to the disproof of the following Stein-like conjecture, stated and studied by e.g. \cite{crs-radial-mt-92, carbery-tubes-09, bcsv-stein-conjecture-06, bs-stein-mt-xray-21, ciw-mt-24, bennett2024tomographic, bgno-phase-space-24, cairo-counterexample-25}:

\begin{conjecture}[Disproved in \cite{cairo-counterexample-25}, \cite{cairo2025power} further confirmed possible power loss for some $\Sigma$]\label{Steinlikeconj}
    Let $\Sigma$ be a compact $C^2$ strictly convex hypersurface patch in $\R^n$ with  volume measure $\mathrm{d}\sigma$ and normal vector $n(\xi)$ at $\xi \in \Sigma$. Let $w \geq 0$ be a weight. Then \[\int \left|\int_{\Sigma} f(\xi)e^{\mathrm{i}x\cdot \xi}\mathrm{d}\sigma (\xi)\right|^2w(x)\mathrm{d}x\lesssim_{\Sigma} \int_{\Sigma} \left(\sup_{T: 1-\text{tube parallel to } n(\xi)} \int_T w\right) \cdot |f(\xi)|^2 \mathrm{d}\sigma(\xi).\]
\end{conjecture}

In many harmonic analysis problems, multipliers for convex bodies are related to Fourier extension operators ($f \mapsto \int_{\Sigma} f(\xi)e^{\mathrm{i}x\cdot \xi}\mathrm{d}\sigma (\xi)$). One connection can be seen by taking the difference of two slightly rescaled convex body multipliers, resulting in a multiplier by a neighborhood of a hypersurface. Conjecture \ref{Steinlikeconj} is related to Conjecture \ref{Steinconj} in this way and the two are often studied alongside each other. Nevertheless,   the disproof of Conjecture \ref{Steinlikeconj} does not automatically disprove Conjecture \ref{Steinconj}: One can view Conjecture \ref{Steinlikeconj} as the case where $r$ is ``maximal'' in the definition of $\mathcal{M}$. But when $\mathcal{M}$ has a supremum taken over all scales, more is required for a disproof. We overcome this difficulty by an adaptation of a ``shifting trick'' used in Fefferman's disproof of the boundedness of the Ball Multiplier in $L^p$ ($p\neq 2$) \cite{fefferman1971multiplier}.

\subsection{A construction from \cite{cairo2025power}}

In order to prove Theorem \ref{Counterexthm}, we need a lemma in \cite{cairo2025power} where a power-loss counterexample to the closely-related Mizohata-Takeuchi Conjecture was constructed.

\begin{lemma}\label{CZcountinglemma}
    Let $\eps>0$ be an arbitrarily small fixed constant and the dimension $n\geq 2$ and smoothness order $k\geq 2$ fixed. Then there exists a large dimension $N$, a compact convex body $K\subset \R^n$ that contains $0$ in its interior, with $C^k$ strictly convex boundary $\partial K = \Sigma$,  and a sequence of positive integers $R_j \to \infty$, such that:

    For every $R_j$, there exists a projection $\pi_j: \R^N \to \R^n$, an affine transform $\tau_j: \R^n \to\R^n$ that is a $r_j$-times rescaling of a rigid motion, $r_j \in [1, 2]$, such that \[\left|\mathcal{N}_{\frac{1}{R_j}}(\Sigma) \cap (\tau_j\pi_j (\frac{1}{R_j^{\frac{1}{N}}}\Z^N))\right|\geq R_j^{\frac{n-1}{n-1+k}-\eps},\] where $\mathcal{N}_{\frac{1}{R_j}}(\Sigma)$ denotes the $\frac{1}{R_j}$-neighborhood of $\Sigma$, and that every hyperplane in $\R^n$ is $\frac{1}{R_j}$-close to $O_{\eps}(R_j^{\eps})$ many points in $\pi_j (\frac{1}{R_j^{\frac{1}{N}}}\Z^N)$. 

Moreover we have the \emph{separation condition}: any two points in $\pi_j (\frac{1}{R_j^{\frac{1}{N}}}\Z^N \cap B_{100})$ are $R_j^{-\frac{1.5}{n}}$-separated.\footnote{The $\frac{1.5}{n}$ here is not sharp. From the proof we see anything $>\frac{1}{n}$ suffices.}
\end{lemma}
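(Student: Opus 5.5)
This lemma is essentially the construction of \cite{cairo2025power}, so the plan is to re-derive it from that paper's ingredients rather than build something new. The strategy is to place a generic projection of a fine high-dimensional lattice in $\R^n$, and then construct a $C^k$ strictly convex surface $\Sigma$ that passes $\frac{1}{R_j}$-close to many of these projected lattice points at infinitely many scales simultaneously. The count of $R^{\frac{n-1}{n-1+k}}$ points reflects $C^k$ interpolation: on a cap of diameter $\rho$, a $C^k$ perturbation of size $\rho^k$ is admissible. One can adjust a surface to hit a $\frac1R$-neighborhood of points at spacing $\rho$ when $\rho^k\sim \frac1R$ times lower-order Taylor freedom. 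Roughly $\rho^{-(n-1)}$ caps then give $R^{\frac{n-1}{n-1+k}}$ points up to $R^{\eps}$ losses.

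\textbf{Step 1 (the point set).} Fix $N$ large and choose $\pi:\R^N\to\R^n$ with algebraically generic (e.g.\ badly approximable / Diophantine) entries. Then $\pi(\frac{1}{R^{1/N}}\Z^N\cap B)$ is a GAP of rank $N$ with about $R$ points in a bounded region, hence of density about $R$. Genericity gives three properties via a Diophantine (Schmidt-subspace-type or simple pigeonhole-counting) argument:
\begin{itemize}
\item[(a)] the separation $R^{-1.5/n}$, which is stronger than the generic spacing $R^{-1/n}$ but is guaranteed by a quantitative linear independence estimate $|\pi(m)|\gtrsim |m|^{-N-o(1)}$ applied to $m$ with $|m|\lesssim R^{1/N}$;
\item[(b)] the statement that each $\frac1R$-slab around a hyperplane contains $O(R^{\eps})$ points, which follows because such a slab has volume $\sim \frac1R$ and equidistribution holds at scale $\frac1R$ up to $R^\eps$ losses, provided $N$ is large relative to $\eps$;
\item[(c)] equidistribution of the GAP at all scales down to nearly $R^{-1/n}$.
\end{itemize}

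\textbf{Step 2 (the surface at one scale).} Start from the unit sphere and work in a patch. Partition it into caps of diameter $\rho=R^{-\frac{1}{n-1+k}}$. In each cap, using (c) at scale $\rho$, pick one lattice point within about $\rho^{k}\cdot\rho^{-?}$, i.e.\ within distance $\ll\rho^k$ normal to the sphere. Then add a bump of height $O(\rho^k)$ and width $\rho$ so that the surface passes within $\frac1R$ of that point. The point-finding needs normal distance $\lesssim \rho^k$ inside a $\rho$-cap, a region of volume $\rho^{n-1+k}=R^{-1}$, which holds with $R^{\eps}$ losses by (c). Bumps of height $\rho^k$ and width $\rho$ have bounded $C^k$ norm, and a small multiplicative constant keeps strict convexity.

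\textbf{Step 3 (all scales simultaneously).} Choose a rapidly increasing sequence $R_j$, using projections $\pi_j$ and similarities $\tau_j$ with $r_j\in[1,2]$. The similarities let us align the lattice with the already-built surface, which is the role of $\tau_j$. At stage $j$, add bumps of height $\ll$ (something summable in $C^k$) with sizes $\rho_j^k$. Since the bumps have bounded but not small $C^k$ norm, attach coefficients $c_j\to0$ slowly, e.g.\ $c_j=R_j^{-\eps/2}$, absorbed into the $\eps$ loss. Later stages must be much finer, so that they move earlier-stage hits by at most $\ll \frac{1}{R_{j'}}$, which is achieved by lacunarity $R_{j+1}\ge \exp(R_j)$. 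The limit surface is then $C^k$ and strictly convex.

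\textbf{Main obstacle.} The delicate point is reconciling the separation and hyperplane conditions (which need $\pi_j$ generic at a quantitative level uniform in $R_j$) with the fact that the surface is built iteratively. I would first try a fixed generic $\pi$ for all $j$, letting only $\tau_j$ vary, and verify (a)–(c) from a single Diophantine exponent bound. If uniformity fails, I would fall back to choosing $\pi_j$ afresh at each stage, with the cited argument of \cite{cairo2025power}.
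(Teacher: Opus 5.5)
You have the right high-level structure. Your Steps 2--3 are a plausible sketch of what the paper simply imports from \cite[Proposition 5.3 and Lemma 5.4]{cairo2025power}: a single-scale construction followed by sequential perturbation. One caveat there: your property (c), equidistribution down to the isotropic scale $R^{-1/n}$, does not cover the regions used in Step 2. Those have thickness $\rho^k=R^{-k/(n-1+k)}\ll R^{-1/n}$, since $(k-1)(n-1)>0$, and the same applies to the $\frac1R$-slabs in (b). That anisotropic equidistribution is exactly what you would be borrowing from the cited work.

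The genuine gap is in the separation condition, which is the one part of the lemma the paper actually proves. Your estimate $|\pi(m)|\gtrsim|m|^{-N-o(1)}$, applied to differences $m\in\Z^N$ with $|m|\lesssim R^{1/N}$, only gives $|\pi(m)|R^{-1/N}\gtrsim R^{-1-1/N-o(1)}$. That is far below the required $R^{-1.5/n}\ge R^{-3/4}$. Relatedly, $R^{-1.5/n}$-separation is \emph{weaker}, not stronger, than the typical spacing $R^{-1/n}$. What makes it nontrivial is that $\sim R$ unstructured points would have minimal gap about $R^{-2/n}$. The gain has to come from the group structure: only the $O(R)$ differences $m$ need controlling, not $R^2$ pairs.

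The paper exploits exactly this with a random projection. For a unit vector $v$, $\mathbb{P}(|\pi_j v|<t)=O_N(t^n)$, so
$\mathbb{P}\bigl(|\pi_j(m)|R_j^{-1/N}<R_j^{-1.5/n}\bigr)=O_N\bigl(R_j^{-1.5+n/N}|m|^{-n}\bigr)$.
Summing over $0<|m|\lesssim R_j^{1/N}$ gives $O_N(R_j^{-1/2}+R_j^{-1.5+n/N})=o(1)$. Since more than $99\%$ of random $\pi_j$ already work for the construction of \cite{cairo2025power} at scale $R_j$, some $\pi_j$ satisfies all requirements at once.

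Your Diophantine route can be repaired, but only with the Dirichlet-optimal exponent $|\pi(m)|\gtrsim_{\pi,\delta}|m|^{1-N/n-\delta}$. This holds for almost every $\pi$, e.g.\ by Khintchine--Groshev applied to $n$ linear forms in $N-n$ variables, and yields separation $\gtrsim R^{-1/n-\delta/N}$. Even then, two issues remain:
\begin{itemize}
\item Your primary plan of a single $\pi$ for all $j$ is not supported by the cited construction, which only supplies a $j$-dependent set of good $\pi_j$ of measure $>99\%$.
\item In your fallback, where $\pi_j$ is chosen afresh, you would need the Diophantine constant bounded below uniformly on a set of $\pi$ of measure close to $1$, so it can be intersected with that good set. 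Your proposal does not provide this.
\end{itemize}
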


\begin{proof}
    All properties except the separation condition are mostly done in the proof of \cite[Proposition 5.3 and Lemma 5.4]{cairo2025power}. We only explain some minor differences/additional steps. First of all, note that one can focus on the construction of $\Sigma$. As long as we construct a compact, strictly convex $C^k$ $\Sigma$ satisfying the counting needed, we can construct $K$ by using a $C^k$ extension of $\Sigma$ as its boundary. One can construct $\Sigma$ as in \cite[Proposition 5.3]{cairo2025power}, with the desired many points at a single scale $R$. This can be then upgraded to a sequence of scales by a standard sequential perturbation argument in the proof of \cite[Lemma 5.4]{cairo2025power}.

    To obtain the separation condition, it suffices to notice that by \cite[Proof of Proposition 5.3]{cairo2025power}, there is a fixed choice of $N$ such that for each sufficiently large $R_j$ chosen there are $>99\%$ of the $\pi_j$ that work. Now for a random $\pi_j$, the probability for a unit vector's projection under $\pi_j$ less than $\eps$ is $O_N(\eps^{n})$. A union bound over the lattice differences gives the probability of $\pi_j$ violating the separation condition is $O_N(R_j^{-0.5}+R_j^{-1.5+\frac{n}{N}})=o_N(1)$.
\end{proof}

\subsection{A detailed outline of the proof of Theorem \ref{Counterexthm}.} Our proof of Theorem \ref{Counterexthm} uses the $K$ constructed in Lemma \ref{CZcountinglemma} where many rescaled projected lattices in $\tau_j\pi_j (\frac{1}{R_j^{\frac{1}{N}}}\Z^N))$ lie on its boundary. For fixed $\eps>0$, apply that lemma to find $N$, $\{R_j\}$ and $K$ and fix the $N$ henceforth in the whole proof. Note the set $\tau_j \pi_j (\frac{1}{R_j^{\frac{1}{N}}}\Z^N))$, when translated to contain $0$, is a subgroup of $\R^n$, and we will crucially use this property.

To motivate the construction, take a bounded patch of $\tau_j\pi_j (\frac{1}{R_j^{\frac{1}{N}}}\Z^N))$ and call it $L$. We also consider the translation $L_0$ of $L$ centered at the origin that becomes an approximate subgroup. We will use a function $\psi_1$ that is a mollified (at scale $\frac{1}{R_j}$) and truncated version of $1_{L_0}$ and try to use $|\hat{\psi_1}|^2$ as the weight $w$ and $\hat{\psi_1}$ as the test function $f$. For convenience, we will use another function $\psi_2$ that is a mollified and truncated version of $1_{\mathcal{N}_{\frac{1}{R_j}}(\Sigma) \cap L}$ and a function $F = \hat{\psi_2}$.

Let us pause and digest and visualize $\hat{\psi}_1$. One way is to use the projection-slicing theorem: $\hat{\psi}_1$ can be roughly visualized as an $n$-dimensional slicing of an $N$-dimensional lattice-based smoothed unit-ball collection inside an $R_j$-ball.

Our constructions were inspired by the study of Mizohata-Takeuchi Conjecture. We now explain the motivation of our choice of $f$ and $w$, and the connection. If the multiplier $S_K$ satisfies the bound \eqref{failedestimateforSK}, then by rescaling, two similar multipliers $S_{(1-\frac{1}{R_j})K}$ and $S_{(1+\frac{1}{R_j})K}$ also satisfy the same bound, and so does their difference, which is a multiplier by $1_{\mathcal{N}_{\frac{1}{R_j}}(\Sigma)}$ that we denote $S_{\mathcal{N}_{\frac{1}{R_j}}(\Sigma)}$. The situation is now related to  the MT Conjecture as motivated before. We would like to disprove

\[\label{MTversion}\int_{\R^n} |S_{\mathcal{N}_{\frac{1}{R_j}}(\Sigma)}f (x)|^2 w(x) \mathrm{d}x \lesssim \int_{\R^n} |f(x)|^2 \mathcal{M} w (x) \mathrm{d}x.\]

Now if we use $f=\hat{\psi_1}$ and $w=|\hat{\psi_1}|^2$, then both sides of \eqref{MTversion} can be explicitly evaluated using Plancherel on the Fourier side. Note that $S_{\mathcal{N}_{\frac{1}{R_j}}(\Sigma)}f$ is roughly the function $F$. We see the left-hand-side integral in \eqref{MTversion} will become an integral of $|\psi_1 * \psi_2|^2$. This has a lot of built-in collisions due to the approximate group structure of $L_0$, and will consequently become much (a power of $R_j$ times) larger than the integral $\int_{\R^n} |f(x)|^2 \frac{w(x)}{R_j}\mathrm{d}x$ in the same way as the blowup calculation for MT in \cite{cairo2025power}. This leads us to see a suspicious aspect of \eqref{MTversion}: if we only take the averages over lines of length $\sim R_j$ in the definition of $\mathcal{M}$, indeed we can hope this modification of $\mathcal{M}(x)$ to be bounded by something like $\frac{w(x)}{R_j^{1-\eps}}$ because of the non-concentration of $L$ around hyperplanes provided by Lemma \ref{CZcountinglemma}. Then \eqref{MTversion} would ``fail'' in the same way as power-losses for MT in \cite{cairo-counterexample-25}. However, recall that in the definition of $\mathcal{M}$ we must take line segments of all lengths including very short ones. This prevents MT-type considerations from entering the picture. Indeed, it is unrealistic to hope $\mathcal{M}w$ to be bounded by $\frac{w}{R_j^{1-\eps}}$ everywhere for any $w$ because almost every point is a Lebesgue point in every direction and we almost surely have $\mathcal{M}w \geq w$.

It turns out that there is a way to bypass the above difficulty and construct a genuine counterexample, and the fix resembles Fefferman's classical disproof \cite{fefferman1971multiplier} of the Ball Multiplier Conjecture. The key is to take $\text{supp}f$ and the $\text{supp}w$ to be $\sim R_j$-separated. This can be ensured by e.g. truncating $\hat{\psi_1}$ inside $B_{R_j}$ and use it as $f$. Recall this choice has a mollification in $\hat{f}$. If we replace that mollification to be at a scale $Q$-times larger than $\frac{1}{R_j}$ ($Q$ a large constant to be determined), corresponding to shrinking $\text{supp} f$ by $\frac{1}{Q}$, then $S_{\mathcal{N}_{\frac{1}{R_j}}(\Sigma)} f$ will be different from $f$. By heuristic from the uncertainty principle, $S_{\mathcal{N}_{\frac{1}{R_j}}(\Sigma)} f$ should spread out and we can then take $w$ to be the truncation of $|\hat{\psi_1}|^2$ outside of $B_{\frac{2}{Q}R_j}$ but inside $B_{R_j}$. Temporarily suppose we can make this $\int_{\R^n} |S_{\mathcal{N}_{\frac{1}{R_j}}(\Sigma)} f(x)|^2 w(x) \mathrm{d}x$ as large as before. Notice that since the support of $f$ is quantitatively separated from the support of $w$, we see when calculating $\mathcal{M}w$, one has to take the segment length $\gtrsim R_j$ inside $\text{supp} f$. This will make the maximal function in $\text{supp} f$ much smaller, matching something like $O_{Q} \left(\frac{\|w\|_{L^{\infty}}}{R_j}\right)$ in the last paragraph, and finally lead to a power blowup of \eqref{MTversion}.

The only loose end is how to lower bound $\int_{\R^n} |S_{\mathcal{N}_{\frac{1}{R_j}}(\Sigma)} f(x)|^2 w(x) \mathrm{d}x$. Even though we have the uncertainty principle intuition, this quantity does not seem explicitly computable and here is the technical method to bound it: We compute it on the Fourier side, and by Plancherel we will see that this amounts to showing some mass around $\mathcal{N}_{\frac{1}{R_j}}(\Sigma)\cap \mathcal{N}_{\frac{Q}{R_j}} (L)$ has larger $L^2$ norm when convolved with a function like $1_{\mathcal{N}_{\frac{1}{R_j}} (L)}$, compared with convolution with a function like $Q^{-n}\cdot 1_{\mathcal{N}_{\frac{Q}{R_j}} (L)}$. For technical reasons, we did not find this so easy to prove either. The good news is that if we shrink the Fourier support of $f$ to only in a $\frac{1}{Q}$-box, then all the points in $L\cap \mathcal{N}_{\frac{1}{R_j}} (\Sigma)$ inside this box will have nearby tangent direction of $\Sigma$ lining up, and this can help us prove the above-mentioned favorable comparison and finish the proof.

\section{Proof of Theorem \ref{Counterexthm}}\label{proofsec}

\begin{proof}[Proof of Theorem \ref{Counterexthm}]
    In this proof, we will first reduce from $S_K$ to a multiplier associated with a thin neighborhood of $\partial K$ and use the construction in Lemma \ref{CZcountinglemma} (where $\Sigma$ is part of $\partial K$). We  remark that the affine transformation $\tau_j$ in Lemma \ref{CZcountinglemma} is mild and harmless.  For ease of notation, when discussing the multiplier associated with $\partial K \supset \Sigma$, we always assume $\tau_j = \text{id}$. It is clear how to modify our construction to accommodate the effect of $\tau_j$ in reality.
    
    We first fix an $\eps>0$  and choose $N, K, \Sigma, \{R_j\}$ depending on  the $n, k, \eps$ as in Lemma \ref{CZcountinglemma}. Fix these parameters throughout (so in this proof in $\lesssim$, etc. we will suppress the dependence on $n, N, K$, etc.). In the proof, we will work both in $\R^N$ and in $\R^n$ and will work on the physical and the Fourier spaces in these dimensions. We will use $B_r (x)$ to denote the $r$-ball around $x$ in these spaces and simply use $B_r$ for $B_r (0)$.

    We will use several truncations, dually mollifications on the Fourier side. We fix a compactly supported smooth function $\phi\geq 0$ with $1\leq \phi\leq C_N$ on $B_1$, $\text{supp}\phi \subset B_{2}$ and $\hat{\phi}\geq 0$ on $\R^N$ and a smooth function $0\leq \psi\leq 1$ with $\psi=1$ on $B_1$, $\text{supp}\psi \subset B_{2}$ on $\R^n$. We caution that we will use $\phi$ to make cutoffs on both the spatial side and the Fourier side, and choose to only work with one function $\phi$ to avoid notational complications. We will also use the following notation: For a tempered distribution $F$ (that will be on the Fourier side in applications) on $\R^N$ and $R, S>0$, we define \[F_{R; S} = \left(F\phi(\frac{\cdot}{S})\right)*(R^N \hat{\phi} (R\cdot)).\] Intuitively, this is ``localizing on the Fourier side at scale $S$ and then localizing on the physical side at scale $R$''. It results in a compactly supported function in the physical $\R^N$, i.e. $\check{F}_{R, S}$ is compactly supported. We have the following more precise formula of $\check{F}_{R, S}$: It is given by a cutoff after a convolution:
    \[(F_{R; S})^{\check{}} = \left(\check{F}*(S^N\check{\phi}(S\cdot))\right)\cdot \phi (\frac{\cdot}{R}).\]

    For a discrete set $X$, by a slight abuse of notation we also use $X$ to denote the tempered distribution $\sum_{x\in X} \delta_x$. For a tempered distribution $b$, we denote the translation of $b$  by a vector $x$ to be $T_x b$.

    We will construct counterexamples to \eqref{failedestimateforSK}. By taking the difference between $S_K$ and another similar multiplier by a small rescaling of $1_K$, we see it suffices to construct explicit  $f=f_j$ and $w=w_j$  to fail \eqref{MTversion}. We will use a large constant $Q>100$ to be determined later. Before fixing $Q$, we will always track and specify the dependence of everything on $Q$. For each sufficiently large $R_j$  (largeness can depend on $Q$), we define \[\label{defnofwj}w_j = \left|\left((\pi_j)_{*}(\frac{1}{R_j^{\frac{1}{N}}}\Z^N)_{R_j, 1}\right)^{\check{}}\right|^2 \left|\phi(\frac{\cdot}{R_j})\right|^2\cdot\left(1-\left|\psi(\frac{Q}{10R_j}\cdot)\right|^2\right).\]

    To define   $f_j$, we need a modulation (i.e. a translation on the Fourier side) and first define the amount of translation. For each $R_j$, define the finite set $D_j = \mathcal{N}_{\frac{1}{R_j}}(\Sigma)\cap\pi_j (\frac{1}{R_j^{\frac{1}{N}}}\Z^N \cap B_1)$.\footnote{If we work with a general $\tau_j$, we should define $D_j = \mathcal{N}_{\frac{1}{R_j}}(\Sigma)\cap\tau_j\pi_j (\frac{1}{R_j^{\frac{1}{N}}}\Z^N \cap B_1)$.} It is large by Lemma \ref{CZcountinglemma}: \[|D_j| \geq R_j^{\frac{n-1}{n-1+k}-\eps}.\] By pigeonholing, there exists $\xi_j \in D_j$ such that the set $E_j=\mathcal{N}_{\frac{1}{Q}} (\xi_j)\cap D_j$ satisfies \[\label{countingofEj}|E_j|=|\mathcal{N}_{\frac{1}{Q}} (\xi_j)\cap D_j| \gtrsim \frac{1}{Q^{n-1}} R_j^{\frac{n-1}{n-1+k}-\eps}.\] We will take this $\xi_j$ to be the amount of translation, and use crucially the fact that at every point in $\mathcal{N}_{\frac{2}{Q}} (\xi_j)\cap\Sigma$, the tangent hyperplane of $\Sigma$ is $O(\frac{1}{Q})$-close to a common hyperplane, denoted by $P_j$.

Now define \[f_j = \left((T_{\xi_j})_{*} (T_{-\xi_j} E_j)_{\frac{R_j}{Q}, \frac{1}{Q}}\right)^{\check{}}.\]

It suffices to explicitly check the failure of \eqref{MTversion} for $f_j$ and $w_j$. To this end, we bound both sides of \eqref{MTversion} explicitly and will mainly rely on computations on the Fourier side. 

We first work with the right-hand side. Let us begin by  understanding the supports of $f$ and $w$. For the support of $f_j$, we notice that $T_{\xi_j}$ corresponds to a modulation and plays no role. Thus $\text{supp}f_j = \text{supp}\left((T_{-\xi_j} E_j)_{\frac{R_j}{Q}, \frac{1}{Q}}\right)^{\check{}}$. We can use the projection-slicing theorem and see \[\text{supp} f_j \subset B_{\frac{2R_j}{Q}}\] because $\text{supp}\phi \subset B_2$. For the support of $w_j$, due to the physical cutoff factor in the end of expression \eqref{defnofwj}, we see \[\text{supp}w_j \subset B_{2R_j}\setminus B_{\frac{10R_j}{Q}}.\]

    Hence $\text{supp}f_j$ and $\text{supp}w_j$ are $\sim \frac{R_j}{Q}$-separated. This shows that when we consider $\mathcal{M}w_j$ on $\text{supp} f_j$, we only need to consider segments with $r\gtrsim \frac{R_j}{Q}$ in the definition \eqref{defnofmaximalfunction}.

    Based on this, we would like to give a strong bound of $\|\mathcal{M}w_j\|_{L^{\infty}(\text{supp}f_j)}$. We first upper bound the integration of $w_j$ on any line. We  can ignore the truncation $\left|\phi(\frac{\cdot}{R_j})\right|^2\cdot\left(1-\left|\psi(\frac{Q}{10R_j}\cdot)\right|^2\right)$, as this only makes the integration larger (up to a constant). Then, by duality, the maximal integration on a line is less than the maximal integration of the Fourier transform on a hyperplane, i.e. we have the bound \[\sup_{\Pi: \text{ hyperplane}}\int_{\Pi} \left((\pi_j)_{*}(\frac{1}{R_j^{\frac{1}{N}}}\Z^N)_{R_j, 1}\right) * \left((\pi_j)_{*}(\frac{1}{R_j^{\frac{1}{N}}}\Z^N)_{R_j, 1}\right).\]

    The integrand is a sum of weighted translations of $\left((\pi_j)_*(R_j^N \hat{\phi}(R_j \cdot))\right)*\left((\pi_j)_*(R_j^N \hat{\phi}(R_j\cdot))\right)$. Each of these is $L^1$-normalized and rapidly decays away from a scale of $R_j^{-1}$. The translation amounts are in a set of the shape $A_j+A_j$ where $A_j$ is the projection of $\frac{1}{R_j^{\frac{1}{N}}}\Z^N$ inside $B_{1}$ under $\pi_j$. This lies in $O(1)$ copies of $A_j$ with each point having multiplicity $O(|A_j|)$. By the conclusion of Lemma \ref{CZcountinglemma}, every hyperplane is $\frac{1}{R_j}$-close to $O(R_j^{\eps})$ many points in $A_j$. We can now conclude (with a standard summability control for all the tails on the Fourier side)
    \[\sup_{l: \text{ line}}\int_{l} w_j \mathrm{d}x \leq \sup_{\Pi: \text{ hyperplane}}\int_{\Pi} \left((\pi_j)_{*}(\frac{1}{R_j^{\frac{1}{N}}}\Z^N)_{R_j, 1}\right) * \left((\pi_j)_{*}(\frac{1}{R_j^{\frac{1}{N}}}\Z^N)_{R_j, 1}\right)\lesssim R_j^{1+\eps}|A_j|.\] Thus 
    \[\|\mathcal{M}w_j\|_{L^{\infty}(\text{supp}f_j)} \lesssim \frac{\sup_{l: \text{ line}}\int_{l} w_j \mathrm{d}x}{(R_j/Q)}\lesssim QR_j^{\eps}|A_j|\]
    and 
    \[\label{RHSestimate}\int_{\R^n} |f_j(x)|^2 \mathcal{M} w_j (x) \mathrm{d}x\lesssim QR_j^{\eps}|A_j|\|f_j\|_2^2 = QR_j^{\eps}|A_j|\|\hat{f}_j\|_2^2 \sim QR_j^{\eps} \cdot (\frac{R_j}{Q})^{2n}\cdot (\frac{R_j}{Q})^{-n} |A_j||E_j|\sim QR_j^{\eps} \cdot (\frac{R_j}{Q})^{n} |A_j||E_j|.\]

    Now we estimate $\int_{\R^n} |S_{\mathcal{N}_{\frac{1}{R_j}}(\Sigma)}f_j (x)|^2 w_j(x) \mathrm{d}x$. The shape of $w_j$ gives a main term and a minor term. We will show the minor term is much smaller than the main term, and at the same time obtain a good lower bound of the main term. These will be done by analyzing the Fourier side. Observe that by definition,
    \begin{eqnarray}\label{IminusII}
        &\int_{\R^n} |S_{\mathcal{N}_{\frac{1}{R_j}}(\Sigma)}f_j (x)|^2 w_j(x) \mathrm{d}x\nonumber\\ = &\|\left(\left((T_{\xi_j})_{*} (T_{-\xi_j} E_j)_{\frac{R_j}{Q}, \frac{1}{Q}}\right)\cdot 1_{\mathcal{N}_{\frac{1}{R_j}} (\Sigma)}\right)*\left((\pi_j)_{*}(\frac{1}{R_j^{\frac{1}{N}}}\Z^N)_{R_j, 1}\right)*\left(R_j^n\hat{\phi}(R_j\cdot)\right)\|_2^2 \nonumber\\
        - &\|\left(\left((T_{\xi_j})_{*} (T_{-\xi_j} E_j)_{\frac{R_j}{Q}, \frac{1}{Q}}\right)\cdot 1_{\mathcal{N}_{\frac{1}{R_j}} (\Sigma)}\right)*\left((\pi_j)_{*}(\frac{1}{R_j^{\frac{1}{N}}}\Z^N)_{R_j, 1}\right)*\left(R_j^n\hat{\phi}(R_j\cdot)\right)*\left((\frac{10R_j}{Q})^n\hat{\psi}(\frac{Q}{10R_j}\cdot)\right)\|_2^2\nonumber\\
        :=& I_1 - I_2.
    \end{eqnarray}

    The third convolution components in $I_1$ and $I_2$  are mollifications and we will discuss their effects after analyzing the first two terms. The second term is a sum of $L^1$-normalized bump functions at scale $\frac{1}{R_j}$ around points in $A_j$. For the first term, before the cutoff it is a sum of subfunctions around points in $E_j\subset A_j$ (and potentially more points in $A_j \setminus E_j$ in $B_{\frac{2}{Q}}$, with some suitable weight). Each subfunction is an $L^1$-normalized bump function at scale $\frac{Q}{R_j}$ but cutoff by approximately the $\frac{1}{R_j}$-neighborhood of a hyperplane parallel to $P_j$. Note that $A_j$ is an approximate group: $A_j+A_j$ can be covered by $O(1)$ copies of translations of $A_j$.  We deduce the convolution of the first two terms is lower-bounded by the sum of $\sim (\frac{R_j}{Q})^n |E_j|$ times characteristic functions of slabs of size $\sim\frac{1}{R_j}\times \frac{Q}{R_j}$ parallel to the hyperplane $P_j$ around $\gtrsim |A_j|$ many points in $A_j + A_j$, and is upper bounded by the sum of $\sim (\frac{R_j}{Q})^n |E_j|$ times of bump functions adapted to slabs of size $\sim\frac{1}{R_j}\times \frac{Q}{R_j}$ parallel to the hyperplane $P_j$ around each point in $A_j + A_j$. The convolution with $R_j^n\hat{\phi}(R_j\cdot)$ in $I_1$ does not change these upper and lower bounds, but the convolution with $(\frac{10R_j}{Q})^n\hat{\psi}(\frac{Q}{10R_j}\cdot)$ (bound this kernel by its absolute value) in $I_2$ mollifies the upper and lower bound by enlarging the essential support by $Q$ times and making the magnitude $\frac{1}{Q}$-times smaller. Note that by the separation condition in the end of Lemma \ref{CZcountinglemma}, as long as $\frac{Q}{R_j}<0.001R_j^{-\frac{1.5}{n}}$ (which we will always make our choice of $Q$ satisfy), all individual boxes above around different points in $A_j + A_j$ will not overlap. Hence we have
    \[I_1 \gtrsim \frac{1}{Q}(\frac{R_j}{Q})^n |E_j|^2 |A_j|\] and \[I_2 \lesssim \frac{1}{Q^2}(\frac{R_j}{Q})^n |E_j|^2 |A_j|.\] Thus we can now take $Q$ to be a large constant and have \[\label{LHSestimate}\int_{\R^n} |S_{\mathcal{N}_{\frac{1}{R_j}}(\Sigma)}f_j (x)|^2 w_j(x) \mathrm{d}x = I_1-I_2 \gtrsim \frac{1}{Q}(\frac{R_j}{Q})^n |E_j|^2 |A_j|.\]
    Combining \eqref{RHSestimate} and \eqref{LHSestimate}, we obtain
    \[\int_{\R^n} |S_{\mathcal{N}_{\frac{1}{R_j}}(\Sigma)}f_j (x)|^2 w_j(x) \mathrm{d}x \gtrsim Q^{-2}R_j^{-\eps}|E_j|\cdot \int_{\R^n} |f_j(x)|^2 \mathcal{M} w_j (x) \mathrm{d}x.\]

    Recall that $Q$ is a chosen constant and $|E_j|$ is lower bounded by \eqref{countingofEj} (and that $\eps$ can be arbitrarily small). We see we disprove \eqref{failedestimateforSK}. Examining this disproof, we see we  even construct examples where \eqref{correctededestimateforSK} holds as long as $R_j$ is sufficiently large.

    The most technical point of the proof is the lower bound of $I_1-I_2$ in \eqref{IminusII}, and we used a whole paragraph to motivate the analysis. Here is a helpful picture illustrating the difference between the supports of the integrand in $I_1$ (middle) and $I_2$ (right):

\scalebox{0.6}{
    \begin{tikzpicture}[font=\small]
  % Three generators of a proper planar GAP; each panel contains 105 points.
  % The enlarged neighborhoods are disjoint, as in the separation hypothesis.
  \def\ux{.90} \def\uy{.07}
  \def\vx{.16} \def\vy{.84}
  \pgfmathsetmacro{\wx}{.19*sqrt(2)}
  \pgfmathsetmacro{\wy}{.11*sqrt(3)}

  \foreach \panel/\shift in {0/0,1/7.65,2/15.30}{
    \begin{scope}[xshift=\shift cm]
      \draw[black!30,rounded corners=2pt,line width=.4pt]
        (-3.55,-2.23) rectangle (3.55,2.23);
      \foreach \i in {-3,...,3}
        \foreach \j in {-2,...,2}
          \foreach \k in {-1,...,1}{
            \pgfmathsetmacro{\px}{\i*\ux+\j*\vx+\k*\wx}
            \pgfmathsetmacro{\py}{\i*\uy+\j*\vy+\k*\wy}
            \begin{scope}[shift={(\px,\py)},rotate=10]
              \ifnum\panel=0
                \fill[black!75] (0,0) circle[radius=.025];
              \fi
              \ifnum\panel=1
                \filldraw[fill=packetblue!75,draw=packetblue,
                          line width=.18pt]
                  (-.105,-.009) rectangle (.105,.009);
              \fi
              \ifnum\panel=2
                % Pale fill represents reduced amplitude after convolution.
                \filldraw[fill=packetred!12,draw=packetred!35,
                          line width=.18pt]
                  (-.105,-.105) rectangle (.105,.105);
              \fi
            \end{scope}
          }
    \end{scope}
  }

  \node[align=center,text width=7.0cm] at (0,-2.82)
    {$A_j+A_j$};
  \node[align=center,text width=7.0cm] at (7.65,-2.82)
    {Characteristic functions of\\anisotropic neighborhoods};
  \node[align=center,text width=7.0cm] at (15.30,-2.82)
    {Flattened characteristic functions\\after convolution in $I_2$};
\end{tikzpicture}
}
\end{proof}

\section{Some other variants of Stein's conjecture}\label{variants}

In \cite{stein-conjecture-79}, Stein commented that for problems like Conjecture \ref{Steinconj},  it is not just their forms \emph{per se} that are important, but equally important are the spirit behind  and  possible research directions they convey. In this section we discuss some consequences of the method in our proof of Theorem \ref{Counterexthm} in some variants of Conjecture \ref{Steinconj}.

In \cite{cairo-counterexample-25}, Cairo disproved the MT conjecture. As opposed to Lemma \ref{CZcountinglemma}, for any $C^2$ hypersurface $\Sigma$ not in a hyperplane (in particular the sphere or truncated paraboloid) in $\R^n$ and any $R>1$, Cairo constructed $A, D \subset B_1$ with $|D| \gtrsim \log R$, $|A|\sim R^c$ ($0<c<1$ fixed) such that: 

(i) $A$ is an embedded Hamming cube of dimension $=|D|\sim \log R$.

(ii) $D$ consists of the generators of the Hamming cube and $|(x+A)\cap A|=\frac{|A|}{2}, \forall x \in D$.

(iii) $A$ intersects any $\frac{1}{R}$-slab at $O(1)$ points.

We comment that by the proof in \cite{cairo-counterexample-25}, one can further require:

(iv) Points in $A+A$ are pairwise $R^{-0.99}$-separated.\footnote{It is not hard to see that $0.99$ can be replaced by any positive number, but $0.99$ suffices.}

Let us see why we can have (iv). Cairo's choice of generators in $D$ all lie on a nondegenerate curve in $\R^n$, and lie in scales in a geometric progression of constant common ratio. The particular choices are flexible as long as we have one point per scale. Now if we only use scales between $R^{-\frac{1}{100n}}$ and $1$, we still have $\sim_n \log R$ many generators to use, and can easily use a crude greedy algorithm to make each point pair (there are less than $R^{0.1}$ such pairs) in $A+A$ to be  $R^{-0.99}$-separated.

If we construct the $E_j$ and $A_j$ based on the above $D$ and $A$ satisfying (i)-(iv), by the same proof of Theorem \ref{Counterexthm}, we obtain a counterexample to Stein's original Conjecture \ref{Steinconj} at the endpoint $\delta=0$ case:

\begin{theorem}\label{Counterexthmendpt}
    For any given dimension $n \geq 2$  and every $R>1$, there exist $f$ and $w$ supported in $B_R$ such that 
    \[\label{correctededestimateforSKlog}\int_{\R^n} |Sf (x)|^2 w(x) \mathrm{d}x \geq \log R\cdot  \int_{\R^n} |f(x)|^2 \mathcal{M} w (x) \mathrm{d}x.\]

    Moreover, there is a similar counterexample $f_K$ if we replace $S$ by $S_K$, where $K$ is any convex body with $C^2$ boundary.
\end{theorem}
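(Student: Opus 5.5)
We follow the proof of Theorem \ref{Counterexthm} step by step, with the projected lattice replaced by Cairo's Hamming-cube configuration. Fix $R$ large and let $\Sigma$ be a compact $C^2$ patch of the unit sphere (respectively of $\partial K$). Take $A, D\subset B_1$ satisfying (i)--(iv), with $D\subset \mathcal{N}_{\frac{1}{R}}(\Sigma)$.

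\textbf{Reduction and choice of $f$, $w$.} First we reduce to the multiplier $S_{\mathcal{N}_{\frac{1}{R}}(\Sigma)}$, as before. This uses the difference of $S$ and a rescaled copy, followed by a smooth Fourier localization to the patch. We then pigeonhole $\xi\in D$ with $E=\mathcal{N}_{\frac{1}{Q}}(\xi)\cap D$ of size $\gtrsim Q^{-(n-1)}\log R$. Since Cairo's generators lie on a curve at geometrically spaced scales, we may instead simply keep the generators in one $\frac1Q$-cap, which still leaves $\gtrsim\log R$ of them. Define
\[
w=\bigl|\bigl(A_{R;1}\bigr)^{\check{}}\bigr|^2\,\bigl|\phi(\tfrac{\cdot}{R})\bigr|^2\bigl(1-\bigl|\psi(\tfrac{Q}{10R}\cdot)\bigr|^2\bigr),
\]
now working directly in $\R^n$ with the distribution $\sum_{a\in A}\delta_a$. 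Define $f$ as the modulated truncation of $(T_{-\xi}E)_{\frac{R}{Q},\frac1Q}$. As before, $\operatorname{supp} f\subset B_{2R/Q}$ and $\operatorname{supp} w\subset B_{2R}\setminus B_{10R/Q}$.

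\textbf{Right-hand side.} The separation of supports again forces segments of length $\gtrsim R/Q$. The line integrals of $w$ are bounded by Fourier-side hyperplane integrals of $A_{R;1}*A_{R;1}$. Using (iii), each $\frac1R$-slab meets $A$ in $O(1)$ points, so it meets $A+A$ with multiplicity $O(|A|)$. This gives
\[
\mathcal{M}w\lesssim Q|A| \quad \text{on } \operatorname{supp} f,
\]
with no $R^\eps$ loss, and hence the right-hand side is $\lesssim Q(R/Q)^n|A||E|$.

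\textbf{Left-hand side.} Write the left-hand side as $I_1-I_2$, exactly as in \eqref{IminusII}. The key input replacing the approximate-group property is (ii): for each $x\in E$, half of $A$ is contained in $A-x$. So the convolution of the $E$-part with $A_{R;1}$ places, at $\gtrsim|A|$ points of $A+A$ (counting the multiplicity of hits from different generators), anisotropic slabs of size $\frac1R\times\frac QR$ parallel to the common tangent plane $P$. Their total coefficient is $\gtrsim (R/Q)^n$ times the number of $x\in E$ landing there.

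Here lies the main obstacle. We need $I_1\gtrsim \frac1Q(R/Q)^n|E|^2|A|$ rather than only $|E||A|$. That requires the translates $x+A$, $x\in E$, to overlap heavily, i.e.\ on average a point of $A+A$ receives $\gtrsim|E|$ contributions. For the Hamming cube this fails in general: $A+D$ is much larger than $A$.

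First fallback. Bound $I_1$ by the diagonal and squared-sum structure and accept $I_1\gtrsim\frac1Q (R/Q)^n|E||A|$ times the average overlap. Then compute the overlap directly: $|(x+A)\cap(y+A)|\ge|A|/4$ for generators $x,y$, by the cube structure, since $x+A$ and $y+A$ share the sub-cube where both coordinates are fixed appropriately. This recovers $\sum_{x,y\in E}|(x+A)\cap(y+A)|\gtrsim|E|^2|A|$, which is what is actually needed after expanding the $L^2$ norm as a double sum over $x,y$. The disjointness of slabs at distinct points of $A+A$ from (iv) makes the cross terms rigorous. The same expansion with the $Q$-times mollification gives $I_2\lesssim Q^{-2}(R/Q)^n|E|^2|A|$.

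\textbf{Conclusion.} Taking $Q$ a large constant yields the ratio $\gtrsim Q^{-O(n)}|E|\gtrsim\log R$. Adjusting the constant, by slightly enlarging the cube dimension or replacing $R$ by a power, gives the factor $\log R$ in \eqref{correctededestimateforSKlog}. For general $C^2$ convex $K$, Cairo's construction applies to any $C^2$ non-flat patch of $\partial K$, so the same argument goes through verbatim.
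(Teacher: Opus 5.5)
Your proposal is correct and follows essentially the paper's own sketch: Cairo's cube replaces the projected lattice, a $\frac1Q$-cap of generators aligns the slabs, (iii) gives $\mathcal{M}w\lesssim Q|A|$ on $\operatorname{supp} f$, and (iv) controls $I_2$. The lower bound for $I_1$ comes from the energy bound $\sum_{x,x'\in E}|(x+A)\cap(x'+A)|\geq \frac14|E|^2|A|$, which is exactly the paper's ``collisions at a positive proportion of points of $A$'', so your ``main obstacle'' was never a real obstruction; the one caveat is that this argument, like the paper's, only gives $\gtrsim\log R$ for large $R$, and your proposed fix for the literal constant $1$ does not work, since replacing $R$ by a power moves the supports out of $B_R$ and (iii) caps the cube dimension at $\log_2 R+O(1)$.
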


\begin{proof}[Proof sketch]
    We recall the procedure to construct $f$ and $w$ for Theorem \ref{Counterexthm}, and use the same kind of construction here. $\hat{f}$ will be like the sum of  bump functions around points in $E_j$, and $w = |\check{b}|^2$ where $b$ will behave like the sum of  bump functions around $A_j$. Now property (i) and (ii) guarantee that $\hat{f}*b$ has (logarithmic) many collisions at a positive proportion of points in $A_j$, (iii) is enough to guarantee that the X-ray transform of $w$ is small in $L^{\infty}$ by a short computation via the projection-slicing theorem, and we need a final technical step to separate the supports of $f$ and $w$ and introduce slightly anisotropic, slab-like  neighborhoods. We introduce a parameter $Q$ and use pigeonholing to make sure slabs in the same direction pile up in the analogue of integral $I_1$ in \eqref{IminusII}, and in the analogue of integral $I_2$, we use separation condition (iv) to ensure a good upper bound.
\end{proof}

Recently, there is a follow-up work \cite{bennett2026rectangles} that, among other things, improves Cairo's lower bound for the paraboloid to $(\log R)^{n-1}$ in dimension $n$. This may be used to construct lower weighted bounds similar to Theorem \ref{Counterexthmendpt} for multipliers associated with a convex body whose boundary contains a paraboloid patch. See also related recent works \cite{fenves2026cusp, fenves2026maximal} to \cite{cairo2025power}.

\begin{remark}
    The author used GPT-6 Astra to help proofread the present article. GPT-6 Astra suggested a more elementary way to generate a logarithmic lower bound for all solids with a $C^{\alpha} (\alpha>0)$ boundary patch. The argument, of de Leeuw-type \cite{deleeuw-multipliers-65}, is included in Appendix \ref{app:direct-logarithmic-bound}. The one-dimensional case is related to work of Wilson \cite{wilson1989weighted} and P\'{e}rez \cite{perez1994weighted}.
\end{remark}

Stein's original problem (5(b) and (c) in \cite{stein-conjecture-79}) also asked for the non-endpoint case (where one considers the Bochner-Riesz multiplier \[S^{\delta}: f \mapsto \left((1-|\xi|^2)_+^{\delta} \hat{f}\right)^{\check{}}\] when $\delta > 0$). The proof of Theorem \ref{Counterexthm} shows this fails for some convex body: The difference between the analogously defined $S_{K}^{\delta}$ and a rescaling by $1+O(\frac{1}{R})$ gives approximately $R^{-\delta} S_{\mathcal{N}_{\frac{1}{R}}(\Sigma)}$ We thus see that the variants of these are false for some convex bodies with $C^k$-boundary for arbitrarily high $k$ whenever $0< \delta< \frac{n-1}{2(n-1+k)}$. However, the original problem for the Bochner-Riesz multiplier remains open.

In dimension $1$, Conjecture \ref{Steinconj} is true with the maximal function on the right-hand side being replaced by $(\mathcal{M} w^s)^{\frac{1}{s}}$ for any $s>1$ (see also \cite{bcsv-stein-conjecture-06}). Because the weights $w_j$ in our counterexample in the last section have a crude $L^{\infty}$ bound, we easily see that in higher dimensions they remain counterexamples for the version of Conjecture \ref{Steinconj} with $(\mathcal{M} w^s)^{\frac{1}{s}}$ for $s>1$ very close to $1$.

\appendix
\section{An elementary logarithmic lower bound for domains with H\"older boundary}
\label{app:direct-logarithmic-bound}
\numberwithin{equation}{section}

\definecolor{directlogblue}{HTML}{286A9A}
\definecolor{directlogpurple}{HTML}{8C4678}
\definecolor{directlogink}{HTML}{273746}

This Appendix is generated by GPT-6 Astra, with correctness checked and minor revisions by the author. The argument is of de Leeuw-type \cite{deleeuw-multipliers-65}. As a warm-up,  we first include a one-dimensional argument that is also related to the work of P\'{e}rez and Wilson \cite{perez1994weighted, wilson1989weighted}.

For a compact set \(K\subset\mathbb R^n\), define
\begin{equation}
\label{eq:direct-log-fourier-definition}
 \widehat f(\xi)=\int_{\mathbb R^n}f(x)e^{-2\pi\mathrm i x\cdot\xi}\,d x,
 \qquad S_Kf(x)=\int_K\widehat f(\xi)e^{2\pi\mathrm i x\cdot\xi}\,d\xi.
\end{equation}
For \(w\ge0\), let \(\mathcal{M}w\) be the largest average along a centered line segment:
\begin{equation}
\label{eq:direct-log-maximal-definition}
 Mw(x)=\sup_{\substack{|\omega|=1\\r>0}}\frac1{2r}\int_{-r}^{r}w(x-s\omega)\,d s.
\end{equation}
\begin{theorem}
Let \(n\ge2\). Suppose that near some boundary point, in suitable orthogonal
coordinates, \(K\) is the region below the graph of a \(C^\alpha\) function,
for some \(\alpha>0\). Then there is \(R_0\) such that,
for every \(R\ge R_0\), there exist nonzero \(f_R,w_R\in C_c^\infty(B_R)\),
with \(0\le w_R\le1\), satisfying
\begin{equation}
 \int |S_Kf_R|^2w_R\gtrsim \log R\int |f_R|^2Mw_R.
 %\qquad B_R=\{x:|x|<R\}.
 \label{eq:direct-log-1}
\end{equation}
In particular, this holds for every compact solid with \(C^\alpha\) boundary.
%For \(n=1\), it holds when \(K\) is any bounded interval of positive length.
\end{theorem}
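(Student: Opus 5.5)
The plan is to reduce to the one-dimensional warm-up by a de Leeuw-type tensorization. First the one-dimensional example. Let $H$ be the Hilbert transform, let $w_1=1_{[0,1]}$ (smoothed) and $b=1_{[1,L]}$ (smoothed at unit scale). On $[0,1]$ we have $|Hb|\gtrsim\log L$, so $\int|Hb|^2w_1\gtrsim\log^2L$. On the other hand $M_1w_1(t)\lesssim 1/t$ for $t\ge1$, so $\int|b|^2M_1w_1\lesssim\log L$. The ratio is $\log L$. A half-line multiplier $1_{\{\xi_n<c\}}$ equals $e^{2\pi\mathrm i c x_n}\tfrac12(I-\mathrm iH)e^{-2\pi\mathrm i cx_n}$. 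Hence the same example, modulated by $c$, works for it. The identity term is harmless because $b$ and $w_1$ have disjoint supports.

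For the local structure, rescale everything in $x_n$ by a factor $\delta^{-1}$: take $b$ supported in $[\delta^{-1},L\delta^{-1}]$ and $w_1$ in $[0,\delta^{-1}]$. Then $\hat b$ is concentrated, with rapidly decaying tails, in a $\delta$-window around $0$, and the ratio is unchanged. Choose coordinates in which $K\cap U=\{\xi_n<g(\xi')\}\cap U$ near the boundary point $(\xi_0',g(\xi_0'))$, with $g\in C^\alpha$ and $U$ a neighbourhood of size $\sim\delta$. Set
$$f(x)=a(x')\,e^{2\pi\mathrm i(x'\cdot\xi_0'+g(\xi_0')x_n)}\,b(x_n).$$
Here $\hat a$ is supported in $B_\rho\subset\R^{n-1}$, with $a\gtrsim1$ on $B_{c/\rho}$ and Schwartz decay beyond. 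Then $\widehat f$ lives in $\{|\xi'-\xi_0'|<\rho\}\times\{|\xi_n-g(\xi_0')|\lesssim\delta\}$ up to tails, and on this set $1_K$ agrees with $1_{\{\xi_n<g(\xi')\}}$. Replacing $g(\xi')$ by $g(\xi_0')$ changes the multiplier only on the set $\{|\xi_n-g(\xi_0')|\le C\rho^\alpha\}$. I will choose $\rho=(\eta\,\delta/L)^{1/\alpha}$ with $\eta$ small. Then the $L^2$ mass of $\widehat b$ in a window of width $\rho^\alpha$ around $0$ is small compared to $\|b\|_2^2$, which is of size $L\delta^{-1}$, since $|\widehat b|\lesssim L\delta^{-1}$. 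Consequently
$$S_Kf=a(x')\,e^{2\pi\mathrm i(\dots)}\,(\text{half-line multiplier applied to }b)(x_n)+\mathrm{Err},$$
with $\|\mathrm{Err}\|_2^2$ small compared to $\|a\|_2^2\log^2L$ after unit-scale normalization. This uses only that $\|1_K\|_\infty\le1$ and that $0\le w\le1$, so weighted errors are bounded by unweighted $L^2$ errors. The tails of $\hat a$ and $\hat b$ outside $U$ are handled the same way.

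Take $w(x)=A(x')\,w_1(x_n)$ with $A=1_{B_{c/\rho}}$ (smoothed), so $0\le w\le1$. Then
$$\int|S_Kf|^2w\gtrsim\|aA^{1/2}\|_2^2\,\delta^{-1}\log^2L\sim\rho^{1-n}\delta^{-1}\log^2L.$$
For the maximal function, fix $x$ with $x_n\ge\delta^{-1}$. Any segment centered at $x$ that reaches the slab $\{0\le y_n\le\delta^{-1}\}$ has length $\gtrsim x_n-\delta^{-1}$ in the $e_n$ direction, and the fraction of it lying in the slab is at most $\delta^{-1}/(x_n-\delta^{-1})$ roughly. This gives $Mw(x)\lesssim\min(1,\delta^{-1}/x_n)$ independently of $x'$, which replaces the 1D bound $M_1w_1(t)\lesssim1/t$. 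Hence
$$\int|f|^2Mw\lesssim\|a\|_2^2\int|b|^2\min(1,\delta^{-1}/x_n)\,dx_n\lesssim\rho^{1-n}\delta^{-1}\log L,$$
and the ratio is $\gtrsim\log L$.

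Finally, to get $f,w\in C_c^\infty(B_R)$, truncate $a$ smoothly at radius $C_N/\rho$. The truncation error is $O((C_N)^{-N})$ in $L^2$, again harmless because $S_K$ is an $L^2$ contraction and $w\le1$. The supports lie in $B_R$ provided $L\delta^{-1}+C/\rho\le R$, that is $L^{1/\alpha}\lesssim_{\delta,\eta}R$. So I take $L=R^{\alpha/2}$, which gives $\log L\sim_\alpha\log R$ for $R\ge R_0$. The main obstacle I expect is making the error estimate in the preceding paragraph quantitatively compatible with the $\log^2L$ main term. The loss from the Hölder variation of $g$ must be genuinely smaller than the main term, not just comparable. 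I would first try the crude bound $\|\widehat b\|_\infty\lesssim L\delta^{-1}$ on the bad window, together with the choice of $\eta$. If that proves insufficient, I would use a sharper estimate of $|\widehat b(\tau)|\lesssim\min(L\delta^{-1},|\tau|^{-1})$ near $0$.
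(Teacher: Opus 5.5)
Your de Leeuw-type reduction to a modulated half-line multiplier is a different and workable route. However, the perturbation step is miscalibrated, and with the parameters you state it fails.

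Every error term has to be small against the main term $\int|S_Kf|^2w\gtrsim\rho^{1-n}\delta^{-1}\log^2L$. This is smaller than $\|f\|_2^2\sim\rho^{1-n}L\delta^{-1}$ by the factor $\log^2L/L$, so showing an error is small compared to $\|b\|_2^2$ proves nothing. With $\rho^\alpha=\eta\delta/L$ and $\eta$ a fixed constant, your own bound for the H\"older error is $\|a\|_2^2\cdot\rho^\alpha(L\delta^{-1})^2\sim\eta\,\rho^{1-n}L\delta^{-1}$. That is far larger than the main term, so the ``consequently'' sentence does not follow.

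A better bound on $\widehat b$ cannot rescue this. The $L^2$ mass of $\widehat b$ sits in $|\tau|\lesssim\delta/L$, where $|\widehat b|\sim L\delta^{-1}$. So on a window of width $\eta\delta/L$ your fallback $\min(L\delta^{-1},|\tau|^{-1})$ is still $L\delta^{-1}$. For a genuinely H\"older (or even a tilted Lipschitz) $g$, the piece $(1_K-1_{\{\xi_n<g(\xi_0')\}})\widehat f$ can carry a fraction comparable to $\eta$ of $\|f\|_2^2$. Hence the inequality $\int|\mathrm{Err}|^2w\le\|\mathrm{Err}\|_2^2$ cannot close the argument at this scaling.

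The truncation of $a$ at radius $C_N/\rho$ has the same defect. It costs about $C_N^{-2N}\rho^{1-n}L\delta^{-1}$, which is not small against $\rho^{1-n}\delta^{-1}\log^2L$ once $L$ is large compared to $C_N^{2N}$.

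The repair is to spend the polynomial room you have:
\begin{itemize}
\item Reduce to $\alpha\le1$, which your bound $|g(\xi')-g(\xi_0')|\lesssim\rho^\alpha$ needs anyway, and take $\rho^\alpha=\delta L^{-2}$. The H\"older error is then $\lesssim\rho^{1-n}\delta^{-1}$, smaller than the main term by $\log^2L$.
\item Truncate $a$ at radius $L/\rho$, at a cost $\lesssim_N L^{1-N}\rho^{1-n}\delta^{-1}$.
\item The tail of $\widehat b$ outside the graph patch costs $\lesssim\rho^{1-n}\delta^{-1}$, which is already acceptable.
\item The support condition becomes $\delta^{-1/\alpha}L^{1+2/\alpha}+L\delta^{-1}\lesssim R$. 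So $L=R^{\alpha/4}$ works and still gives $\log L\sim_\alpha\log R$.
\end{itemize}
The rest of your argument is fine: the lower bound $|Hb|\gtrsim\log L$ on the slab and the bound $Mw(x)\lesssim\min(1,\delta^{-1}/x_n)$.

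By contrast, the paper never uses a global $L^2$ perturbation. It writes $S_Kf_R$ explicitly on the slab $-1\le t\le0$ and bounds the moving-endpoint term pointwise by $O(1)$, using $|e^{\mathrm{i}s}-1|\le|s|$ to cancel the kernel $1/(u-t)$. This gives $S_Kf_R=-\frac{\alpha\log R}{2\pi\mathrm{i}}g_R+O(1)$ there. That pointwise control lets the paper work at exactly your original scaling: tangential frequency scale $R^{-1}$ against input length $R^\alpha$, i.e.\ $\rho^\alpha\sim1/L$. At that scaling the error can be comparable to $\|f\|_2$ globally but is only $O(1)$ on the support of the weight. Your softer route avoids that computation at the price of a worse but still polynomial relation between $L$ and $R$, which is harmless for a logarithmic statement.
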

Only the stated graph patch is required; the rest of \(K\) may have arbitrary
geometry. %Constants may depend on \(K,\alpha\), and fixed cutoffs, but not on \(R\). 
By an analogous proof, we also see that if we replace the $C^{\alpha}$ assumption by mere continuity, the quotient of both sides of \eqref{eq:direct-log-1} can be shown to be unbounded.

\subsection*{Proof in one dimension}
\begin{proof}[Proof of \eqref{eq:direct-log-1} in one dimension]
After modulation, take \(K=[-a,0]\), where \(a>0\). Choose smooth
\(0\le h\le1\), supported in \([1,R/2]\), equal to \(1\) on \([2,R/3]\),
with monotone transitions, so \(\int|h'|=2\). Fix a nonzero smooth
\(0\le \psi\le1\) supported in \((-1,0)\).
For \(-1\le t\le0\), integration over the frequency interval gives
\begin{equation}
 S_Kh(t)=\frac1{2\pi\mathrm i}\int_1^{R/2}h(u)
       \frac{1-e^{-2\pi\mathrm i a(t-u)}}{t-u}\,d u
       =-\frac{\log R}{2\pi\mathrm i}+O_a(1).
 \label{eq:direct-log-2}
\end{equation}
Indeed, the integral of \(h(u)/(t-u)\) is \(-\log R+O(1)\). %: replacing \(h\) by \(\mathbf1_{[1,R/2]}\) costs at most \(\log3\).
The remaining oscillating integral, by integration by parts, has absolute value at most
\begin{equation}
\label{eq:direct-log-oscillatory-bound}
 \frac1{2\pi a}\int_1^{R/2}
 \left(\frac{|h'(u)|}{u-t}+\frac{h(u)}{(u-t)^2}\right)\,d u
 \le\frac3{2\pi a}.
\end{equation}
At \(u\ge1\), %a centered interval reaching \([-1,0]\) has half-length at least \(u\) and contains at most one unit of weight. Thus 
we see by definition that \(\mathcal{M} \psi(u)\le1/(2u)\), and
\begin{equation}
\label{eq:direct-log-one-dimensional-integrals}
 \int|S_Kh|^2 \psi\gtrsim_a(\log R)^2,
 \qquad \int|h|^2\mathcal{M} \psi\le\frac12\int_1^{R/2}\frac{\,d u}{u}
 \lesssim\log R.
\end{equation}
Taking \(f_R=h\) and \(w_R=\psi\) proves \eqref{eq:direct-log-1} in dimension $1$.
\end{proof}

\subsection*{The higher-dimensional construction}
We recall that in the above construction and computation \eqref{eq:direct-log-2}, each dyadic interval in $\text{supp} h$ contributes a constant to the
nonoscillating integral. These contributions were added up and then squared. On the other hand, the maximal average contributes only the integral of $\frac1u$. We construct a higher-dimensional analogue by taking a product example of this construction and balls around the origin.

\begin{center}
\resizebox{0.88\linewidth}{!}{%
\begin{tikzpicture}[x=0.70cm,y=0.42cm,>=Stealth,font=\small]
 \fill[directlogpurple!22] (-1,0) rectangle (0,0.85);
 \draw[directlogpurple,thick] (-1,0)--(-1,0.85)--(0,0.85)--(0,0);
 \foreach \a/\b/\opacity in {1/2/35,2/4/28,4/8/21,8/16/14}{
  \fill[directlogblue!\opacity] (\a,0) rectangle (\b,0.85);
  \draw[directlogblue!75] (\a,0) rectangle (\b,0.85);
 }
 \draw[->,directlogink] (-1.6,0)--(16.8,0);
 \foreach \x/\lab in {-1/{-1},0/0,1/1,2/2,4/4,8/8,16/{R/2}}{
   \draw[directlogink] (\x,0)--(\x,-0.10);
   \node[below,font=\footnotesize] at (\x,-0.12) {$\lab$};
 }
 \node[directlogpurple,above] at (-0.5,0.86) {$\psi$};
 \node[directlogblue,above] at (10,0.86) {$h$};
 \fill[directlogpurple] (-0.5,-0.02) circle (1.6pt);
 \node[below,directlogpurple,font=\footnotesize] at (-0.5,-0.52) {$t$};
 \draw[->,directlogblue!70] (1.5,0.98) to[out=145,in=80] (-0.5,0.17);
 \draw[->,directlogblue!70] (3,0.98) to[out=135,in=72] (-0.5,0.17);
 \draw[->,directlogblue!70] (6,0.98) to[out=140,in=64] (-0.5,0.17);
 \draw[->,directlogblue!70] (12,0.98) to[out=145,in=56] (-0.5,0.17);
\end{tikzpicture}
}
\end{center}
\begin{center}
\small The input lies to the right of the weight. The subdivision shows the contributions from all dyadic intervals.
\end{center}

 %The point to check is that moving the endpoint of the frequency interval changes the output by only a bounded amount. We will also bound the contribution from the rest of \(K\).

It suffices to prove the theorem for \(0<\alpha\le1\). In the proof, we suppress dependences on $\alpha$. %; when the given regularity exponent is greater than \(1\), use the argument with exponent \(1\).
Translate the chosen frequency point to \(0\) and use its graph coordinates.
This amounts to a modulation and rotation in physical space, preserving \eqref{eq:direct-log-1}
and supports in \(B_R\). Write frequency coordinates as \((\sigma,\eta)\)
and physical coordinates  \((u,z)\).
For fixed \(a,\rho>0\), the local graph condition is
\begin{equation}
 K\cap\{ |\sigma|<a,\ |\eta|<\rho\}
 =\{(\sigma,\eta):|\eta|<\rho,\ -a<\sigma\le\beta(\eta)\},
 \label{eq:direct-log-3}
\end{equation}
where by the $C^{\alpha}$ assumption,
\begin{equation}
 \beta(0)=0,\qquad |\beta(\eta)|\le C_K|\eta|^\alpha,
 \qquad |\beta(\eta)|<a/2\quad (|\eta|<\rho).
 \label{eq:direct-log-4}
\end{equation}
Here \(a\) is the fixed vertical half-width of the cylinder, and \(\rho\)
is its fixed transverse radius. %For \(0<\alpha\le1\), a \(C^\alpha\) graph satisfies \(|\beta(\eta)-\beta(\eta')|\le C|\eta-\eta'|^\alpha\), which gives \eqref{eq:direct-log-4}.

For the remainder of the proof, choose \(h\) as above with \(R\) replaced
by \(R^\alpha\):
\begin{equation}
\label{eq:direct-log-longitudinal-cutoff}
 0\le h\le1,\quad \operatorname{supp}h\subset[1,R^\alpha/2],\quad
 h=1\text{ on }[2,R^\alpha/3],\quad \int|h'|=2.
\end{equation}
Fix smooth \(0\le g\le1\) on \(\mathbb R^{n-1}\), equal to \(1\) on \(B_{1/2}\)
and supported in \(B_1\). Using the same \(\psi\) as above, put
\begin{equation}
 g_R(z)=g(4z/R),\qquad
 f_R(u,z)=h(u)g_R(z),\qquad w_R(t,z)=\psi(t)g_{\frac{R}{2}} (z).
 \label{eq:direct-log-5}
\end{equation}
Both supports lie in \(B_R\) for large \(R\). %On the support of \(w_R\), \(g_R(z)=1\), and \(\int w_R\sim R^{n-1}\).

\begin{center}
\resizebox{0.88\linewidth}{!}{%
\begin{tikzpicture}[x=0.75cm,y=0.57cm,>=Stealth,font=\small]
 \fill[directlogblue!13] (0.8,-1.45) rectangle (8,1.45);
 \draw[directlogblue,thick] (0.8,-1.45) rectangle (8,1.45);
 \fill[directlogpurple!23] (-0.7,-0.725) rectangle (0,0.725);
 \draw[directlogpurple,thick] (-0.7,-0.725) rectangle (0,0.725);
 \draw[->,directlogink!65] (-1.1,0)--(8.6,0) node[right] {$u$};
 \node[directlogblue] at (4.3,0.8) {$\operatorname{supp} f_R$};
 \node[directlogpurple,above] at (-0.35,0.77) {$w_R$};
 \node[below] at (0,-1.52) {$0$};
 \node[below] at (0.8,-1.52) {$1$};
 \node[below] at (8,-1.52) {$R^\alpha/2$};
 \draw[<->,directlogink!70] (8.4,-1.45)--(8.4,1.45);
 \node[right] at (8.4,0.85) {$R/2$};
\end{tikzpicture}
}
\end{center}
\begin{center}
\small A two-dimensional slice of the supports. The weight has half the
transverse radius of the input. The drawing is not to scale.
\end{center}

\subsection*{An exact formula using the graph patch}
Write \(K_\eta=\{\sigma:(\sigma,\eta)\in K\}\). These sections need not be intervals but contain intervals from the right when $\eta$ is close to $0$.

\begin{lemma}
For the functions in \eqref{eq:direct-log-5}, \(-1\le t\le0\), and \(z\in\mathbb R^{n-1}\),
\begin{equation}
 S_Kf_R(t,z)=\int_{\mathbb R^{n-1}}e^{2\pi\mathrm i z\cdot\eta}\widehat g_R(\eta)
       \int_{K_\eta}e^{2\pi\mathrm i t\sigma}\widehat h(\sigma)\,d\sigma\,d\eta
 \label{eq:direct-log-6}
\end{equation}
and, more explicitly,
\begin{equation}
 \boxed{\begin{aligned}
 S_Kf_R(t,z)=\frac1{2\pi\mathrm i}\int_{|\eta|<\rho}\int_1^{R^\alpha/2}
 &e^{2\pi\mathrm i z\cdot\eta}\widehat g_R(\eta)h(u)\\[-2pt]
 &\times\frac{e^{2\pi\mathrm i\beta(\eta)(t-u)}-e^{-2\pi\mathrm i a(t-u)}}{t-u}
 \,d u\,d\eta+E(t,z),
 \end{aligned}}
 \label{eq:direct-log-7}
\end{equation}
where the remainder is
\begin{equation}
\label{eq:direct-log-remainder}
\begin{aligned}
 E(t,z)={}&\int_{|\eta|<\rho}e^{2\pi\mathrm i z\cdot\eta}\widehat g_R(\eta)
       \int_{K_\eta\cap\{|\sigma|\ge a\}}e^{2\pi\mathrm i t\sigma}\widehat h(\sigma)\,d\sigma\,d\eta\\
 &+\int_{|\eta|\ge\rho}e^{2\pi\mathrm i z\cdot\eta}\widehat g_R(\eta)
       \int_{K_\eta}e^{2\pi\mathrm i t\sigma}\widehat h(\sigma)\,d\sigma\,d\eta.
\end{aligned}
\end{equation}
All these integrals converge absolutely to $O_{K, g} (1)$.
\end{lemma}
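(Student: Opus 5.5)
We prove the lemma by a Fubini computation on the Fourier side, combined with the one-dimensional calculation behind \eqref{eq:direct-log-2}. Because $f_R=h\otimes g_R$, we have $\widehat f_R(\sigma,\eta)=\widehat h(\sigma)\widehat g_R(\eta)$. Both factors are Schwartz functions, and $K$ is compact. Hence the integral $S_Kf_R(t,z)=\int_K \widehat h(\sigma)\widehat g_R(\eta)e^{2\pi\mathrm i(t\sigma+z\cdot\eta)}\,d\sigma\,d\eta$ converges absolutely. Fubini in the $\eta$-slices then gives \eqref{eq:direct-log-6} at once.

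Next we split the $\eta$-integral into $|\eta|<\rho$ and $|\eta|\ge\rho$. For $|\eta|<\rho$ we further split $K_\eta$ into $K_\eta\cap\{|\sigma|<a\}$ and $K_\eta\cap\{|\sigma|\ge a\}$. By \eqref{eq:direct-log-3}, the first piece is exactly the interval $(-a,\beta(\eta)]$. The pieces with $|\sigma|\ge a$ or $|\eta|\ge\rho$ are precisely the two terms of $E$ in \eqref{eq:direct-log-remainder}. On the interval $(-a,\beta(\eta)]$ we insert $\widehat h(\sigma)=\int h(u)e^{-2\pi\mathrm i u\sigma}\,du$. Since $h$ is compactly supported in $[1,R^\alpha/2]$ and the interval is bounded, Fubini allows us to integrate in $\sigma$ first:
\[
\int_{-a}^{\beta(\eta)}e^{2\pi\mathrm i\sigma(t-u)}\,d\sigma=\frac{e^{2\pi\mathrm i\beta(\eta)(t-u)}-e^{-2\pi\mathrm i a(t-u)}}{2\pi\mathrm i(t-u)} .
\]
This formula is valid because $t\le 0<1\le u$, so $t-u\neq0$. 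Substituting it produces the boxed expression \eqref{eq:direct-log-7}, and it is still absolutely integrable, since $|t-u|\ge1$ and the $u$-range is compact.

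The remaining task, and the only real obstacle, is showing that every integral is $O_{K,g}(1)$ uniformly in $R$. Here two quantities grow with $R$: $\|\widehat h\|_{L^1}$ may grow, while $\|\widehat g_R\|_{L^1}=\|\widehat g\|_{L^1}$ is scale-invariant. For the main term this is straightforward. The kernel is bounded by $2/(u-t)\le 2/u$, so the $u$-integral is at most $\int_1^{R^\alpha/2}2\,du/u\lesssim\log R$. The $\eta$-integral costs $\|\widehat g\|_{L^1}$, giving absolute convergence with an $O(\log R)$ bound. (The sharp asymptotics of the main term belong to the subsequent analysis, not to this lemma.)

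For $E$ we use that $\widehat h$ is the Fourier transform of a compactly supported function with $\int|h'|=2$, so $|\widehat h(\sigma)|\le \int|h'|/(2\pi|\sigma|)=1/(\pi|\sigma|)$. That bound is not integrable over a fixed compact set of $\sigma$ far from $0$, but it is bounded there. Since $K$ is compact, the set $\{|\sigma|\ge a\}\cap K_\eta$ lies in $a\le|\sigma|\le C_K$. So the inner integral is at most $\int_{a\le|\sigma|\le C_K}d\sigma/(\pi|\sigma|)=O_K(1)$, and the outer integral is at most $\|\widehat g\|_{L^1}$. The term with $|\eta|\ge\rho$ needs care near $\sigma=0$, where $|\widehat h|$ can be as large as $R^\alpha$. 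There the key observation is that $\widehat g_R(\eta)=(R/4)^{n-1}\widehat g(R\eta/4)$ is rapidly decaying for $|\eta|\ge\rho$: we bound $|\widehat g_R|\lesssim_N R^{-N}$ in $L^1(\{|\eta|\ge\rho\})$ and use $|\widehat h|\le\|h\|_{L^1}\le R^\alpha$. The product is $O(1)$. Together these give all the stated absolute convergence claims with constants depending only on $K$ and $g$.
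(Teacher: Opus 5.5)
Your proposal is correct and follows essentially the same route as the paper: Fubini on the slices $K_\eta$, then the graph description of $K$ near the patch together with expanding $\widehat h$ to get the boxed formula. The bounds on the two pieces of $E$ also match: $|\widehat h(\sigma)|\lesssim\min\{R^\alpha,|\sigma|^{-1}\}$, $\|\widehat g_R\|_{L^1}=\|\widehat g\|_{L^1}$, and rapid decay of $\widehat g_R$ on $|\eta|\ge\rho$. For the $|\eta|\ge\rho$ piece you use the cruder inner bound $C_K R^\alpha$ where the paper uses $C_K\log R$, which is harmless given the $R^{-N}$ decay. One small slip: $1/|\sigma|$ \emph{is} integrable on compact sets away from $0$ (the only problem is near $\sigma=0$), but your actual estimate $\int_{a\le|\sigma|\le C_K}d\sigma/(\pi|\sigma|)=O_K(1)$ is correct.
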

\begin{proof}
Equation \eqref{eq:direct-log-6} is Fourier inversion applied to
\(\widehat f_R(\sigma,\eta)=\widehat h(\sigma)\widehat g_R(\eta)\).
On \(|\eta|<\rho\), \eqref{eq:direct-log-3} identifies the part with \(|\sigma|<a\) as
\([-a,\beta(\eta)]\), up to endpoints. Expanding \(\widehat h\) gives
\begin{equation}
\label{eq:direct-log-interval-formula}
 \int_{-a}^{\beta(\eta)}e^{2\pi\mathrm i t\sigma}\widehat h(\sigma)\,d\sigma
 =\frac1{2\pi\mathrm i}\int_1^{R^\alpha/2}h(u)
 \frac{e^{2\pi\mathrm i\beta(\eta)(t-u)}-e^{-2\pi\mathrm i a(t-u)}}{t-u}\,d u.
\end{equation}
Splitting \eqref{eq:direct-log-6} by these frequency regions proves \eqref{eq:direct-log-7} and the displayed formula for \(E\).

To bound \(E\), compact support and one integration by parts give
\begin{equation}
\label{eq:direct-log-fourier-decay}
 |\widehat h(\sigma)|\le\min\{\|h\|_1,\|h'\|_1/(2\pi|\sigma|)\}
 \le C\min\{R^\alpha,|\sigma|^{-1}\}.
\end{equation}
Since \(K\) is bounded, uniformly in \(\eta\),
\begin{equation}
 \int_{K_\eta\cap\{|\sigma|\ge a\}}|\widehat h(\sigma)|\,d\sigma\le C_K,
 \qquad
 \int_{K_\eta}|\widehat h(\sigma)|\,d\sigma\le C_K\log R.
 \label{eq:direct-log-8}
\end{equation}
Also, \(\widehat g_R(\eta)=(R/4)^{n-1}\widehat g(R\eta/4)\), so
\begin{equation}
 \int|\widehat g_R|=\int|\widehat g|,
 \qquad \int_{|\eta|\ge\rho}|\widehat g_R(\eta)|\,d\eta\le C_{K,g}R^{-2}.
 \label{eq:direct-log-9}
\end{equation}
The second bound follows, after rescaling, from
\(|\widehat g(\eta)|\le C_g(1+|\eta|)^{-n-3}\).
Thus the first integral defining \(E\) is bounded by \(C_{K,g}\), and the
second by \(C_{K,g}R^{-2}\log R\). This proves the claim; the same bounds
justify the integrations.
\end{proof}

\subsection*{Proof of the theorem in higher dimensions}
\begin{proof}[Proof of \eqref{eq:direct-log-1} in higher dimensions]
In \eqref{eq:direct-log-7}, split the numerator as
\(1+(e^{2\pi\mathrm i\beta(\eta)(t-u)}-1)-e^{-2\pi\mathrm i a(t-u)}\). This gives
\begin{equation}
 S_Kf_R=T_0+T_\beta+T_{-a}+E.
 \label{eq:direct-log-10}
\end{equation}
We define and estimate these three remaining terms below. All bounds are
uniform for \(-1\le t\le0\) and every \(z\).

\textit{The main term.}
\begin{equation}
\label{eq:direct-log-main-term}
 T_0(t,z):=\frac1{2\pi\mathrm i}
 \left(\int_{|\eta|<\rho}e^{2\pi\mathrm i z\cdot\eta}\widehat g_R(\eta)\,d\eta\right)
 \left(\int_1^{R^\alpha/2}\frac{h(u)}{t-u}\,d u\right)
 =-\frac{\alpha\log R}{2\pi\mathrm i}g_R(z)+O(1).
\end{equation}
Indeed, the second factor in parentheses is \(-\alpha\log R+O(1)\), by the one-dimensional proof.
By \eqref{eq:direct-log-9}, the first factor is \(g_R(z)+O(R^{-2})\).

\textit{The moving endpoint.}
\begin{equation}
\label{eq:direct-log-moving-endpoint}
 T_\beta(t,z):=\frac1{2\pi\mathrm i}
 \int_{|\eta|<\rho}\int_1^{R^\alpha/2}e^{2\pi\mathrm i z\cdot\eta}\widehat g_R(\eta)h(u)
 \frac{e^{2\pi\mathrm i\beta(\eta)(t-u)}-1}{t-u}\,d u\,d\eta.
\end{equation}
The inequality \(|e^{\mathrm i s}-1|\le|s|\) cancels the denominator \(u-t\). Hence
\begin{equation}
\label{eq:direct-log-moving-endpoint-bound}
 \begin{aligned}
 |T_\beta(t,z)|
 &\le C_KR^\alpha\int |\eta|^\alpha|\widehat g_R(\eta)|\,d\eta\\
 &=C_K4^\alpha\int |\eta|^\alpha|\widehat g(\eta)|\,d\eta\le C_{K,g}.
 \end{aligned}
\end{equation}
The rescaling cancels the factor \(R^\alpha\). This is the only place where
the H\"older exponent determines the length of the input. We remark that if we only assume $\beta$ is continuous and would like to derive unboundedness of the quotient of both sides of \eqref{eq:direct-log-1}, then by a short modification of the present argument, we can take the length of support of $h$ to be $L = \frac{1}{R^{-1}+\sup_{|\eta|\leq R^{-\frac12}} |\beta (\eta)|}$ (instead of $R^{\alpha}$), and see that this term has a fixed upper bound. This $L$ goes to $\infty$ when $R \to \infty$, enough to ensure unboundedness of the main term.

\textit{The fixed endpoint.}
\begin{equation}
\label{eq:direct-log-fixed-endpoint}
 T_{-a}(t,z):=-\frac1{2\pi\mathrm i}
 \int_{|\eta|<\rho}\int_1^{R^\alpha/2}e^{2\pi\mathrm i z\cdot\eta}\widehat g_R(\eta)h(u)
 \frac{e^{-2\pi\mathrm i a(t-u)}}{t-u}\,d u\,d\eta.
\end{equation}
The same integration by parts as in the one-dimensional proof bounds its
inner integral by \(3/(2\pi a)\). Thus \eqref{eq:direct-log-9} gives \(|T_{-a}|\le C_{K,g}\).
Combining these estimates with the lemma proves
\begin{equation}
 \boxed{\displaystyle
 S_Kf_R(t,z)=-\frac{\alpha\log R}{2\pi\mathrm i}g_R(z)+O_{K,g}(1),
 \qquad -1\le t\le0.}
 \label{eq:direct-log-11}
\end{equation}

It remains to compare the weighted integrals. The weight is supported in
\(-1\le t\le0\). A segment centered at \((u,z)\), \(u\ge1\), reaching this slab
has half-length at least \(u/|\omega_1|\), and its intersection with the slab
has length at most \(\frac{1}{|\omega_1|}\). Therefore
\(\mathcal{M}w_R(u,z)\le \frac{1}{2u}\); directions with \(\omega_1=0\) miss the slab.
Since \(g_R=1\) on the support of \(w_R\), \eqref{eq:direct-log-11} and \eqref{eq:direct-log-5} yield
\begin{equation}
\label{eq:direct-log-weighted-integrals}
 \int|S_Kf_R|^2w_R\gtrsim R^{n-1}(\alpha\log R)^2,
 \qquad
 \int|f_R|^2Mw_R\lesssim R^{n-1}\alpha\log R.
\end{equation}
For the fixed positive exponent \(\alpha\), these inequalities imply \eqref{eq:direct-log-1}.
\end{proof}

\printbibliography
\end{document}